\documentclass[12pt,a4paper]{amsart}
\RequirePackage{iftex}

\ifdefined\pdfoutput
  \pdfoutput=1
\fi

\ifPDFTeX
  \PassOptionsToPackage{pdftex}{graphicx}
  \PassOptionsToPackage{pdftex}{xcolor}
\else
  \PassOptionsToPackage{dvipdfmx}{graphicx}
  \PassOptionsToPackage{dvipdfmx}{xcolor}
\fi

\usepackage{graphicx}
\usepackage{xcolor}
\usepackage{tikz}

\usepackage{amsmath,amsthm,mathtools, amsfonts,amssymb}

\usepackage[utf8]{inputenc}
\usepackage{graphicx} 
\usepackage{color}
\usepackage{xcolor}
\usepackage{tikz}
\usepackage{tikz-cd}
\usepackage[all]{xy}
\usetikzlibrary{cd,positioning}
\usepackage{hyperref}
\usepackage{extarrows}
\usepackage{ytableau}

\usepackage[mathscr]{euscript}

\usepackage[colorinlistoftodos]{todonotes}
\usepackage{amsmath}
\allowdisplaybreaks[4] 
\numberwithin{equation}{section}

\usepackage{tikz}

\date{}

\newcommand{\Inv}{\mathrm{Inv}}
\newcommand{\Des}{\mathrm{Des}}

\newcommand{\Gr}{\mathrm{Gr}}
\newcommand{\loc}{\mathrm{loc}}
\newcommand{\OGr}{\ell}
\newcommand{\F}{\mathbb{F}}
\newcommand{\ePet}{\hat{\mathbb{L}}_G}
\newcommand{\minrep}[1]{\lfloor #1\rfloor_P}
\newcommand{\Pet}{\mathbb{L}_G}

\newcommand{\Z}{\mathbb{Z}}

\newcommand{\bO}{\mathbb{O}}

\newcommand{\piv}{\varpi^\lor}

\newcommand{\CO}{\mathscr{O}_{G/B}}

\newcommand{\COP}{\mathscr{O}_{G/P}}

\newcommand{\af}{\mathrm{af}}

 \newcommand{\miw}[1]{v_{#1}}
\newcommand{\ad}{\mathrm{ad}}

\newcommand{\Wexgr}{\hat{W}_\af^0}

\newcommand{\pair}[2]{\langle #1, #2 \rangle}
\DeclareMathOperator{\inv}{Inv}

\newtheorem{thm}{Theorem}[section]
\newtheorem{lem}[thm]{Lemma}
\newtheorem{prop}[thm]{Proposition}
\newtheorem{cor}[thm]{Corollary}

\newtheorem{example}[thm]{Example}

\newtheorem{defn}[thm]{Definition}
\theoremstyle{remark}
\newtheorem{remark}[thm]{Remark}
\title[Seidel product formula in equivariant quantum K-theory]{Seidel product formula in equivariant quantum $K$-theory of flag varieties}
\author{Takeshi Ikeda, Takafumi Kouno, Satoshi Naito}

\begin{document}
\begin{abstract}
We prove a Seidel product formula
for the torus-equivariant quantum $K$-theory of a generalized flag variety $G/P.$
This is a natural generalization of 
the corresponding results by Buch, Chaput, and Perrin for the cominuscule flag varieties.
Our proof is based on the $K$-theoretic Peterson isomorphism, due to Kato. We also use a version of the $K$-theoretic nil-Hecke algebra associated with the extended affine Weyl group, which was studied by Ikeda, Shimozono, and Yamaguchi. \end{abstract}
\maketitle

\section{Introduction}

Let \(G\) be a simple, simply connected complex algebraic group. Fix a
maximal torus \(T\) of \(G\) and a Borel subgroup \(B\) of \(G\)
containing \(T\).
We study the $T$-equivariant quantum $K$-theory\footnote{Throughout the paper, $QK_T(G/B)$ denotes the polynomial version of equivariant quantum $K$-theory, after Anderson--Chen--Tseng \cite{ACT}.} $QK_T(G/B)$
of the flag
variety $G/B$.
The ring $QK_T(G/B)$ admits the Schubert basis $\{\CO^w\}$ indexed by the
Weyl group $W$.

Seidel's construction \cite{S} has played an important role in quantum
cohomology and quantum $K$-theory.
In quantum $K$-theory, Buch, Chaput, and Perrin \cite{BCP} proved a Seidel
product formula for cominuscule flag varieties.
More recently, Li, Liu, Song, and Yang \cite{LLSY} studied the Seidel
representation for Grassmannians by using curve neighbourhood.
On the other hand, Chaput, Manivel, and Perrin  \cite{CMP,CP} established the
corresponding formula in equivariant quantum cohomology for arbitrary
$G/P$.

The main purpose of this paper is to prove a Seidel product formula in
$T$-equivariant quantum $K$-theory for general flag varieties $G/B$.
Our proof is a direct application of Kato's $K$-theoretic Peterson
isomorphism \cite{Kato} together with the extended Peterson algebra of
Ikeda--Shimozono--Yamaguchi \cite{ISY}.
Using Kato's pushforward formula \cite{Kato2}, we further obtain the corresponding
Seidel product formula for $QK_T(G/P)$ for any parabolic subgroup
$P\subset G$.


\subsection{Seidel classes}
Let $I$ denote the index set of the Dynkin nodes of $G$.
A Dynkin node  $i\in I$ of $G$ is called \emph{special} (or \emph{cominuscule}) 
if 
\begin{equation}
\langle \alpha,\piv_i\rangle\in \{0,1\}\quad \text{for all positive roots $\alpha$},
\label{eq:cominuscule}
\end{equation}
where $\piv_i$ is the fundamental coweight corresponding to $i\in I$ (see \S \ref{ssec:Sigma}, \S \ref{ssec:Seidel} for other aspects of special nodes). 
Let $W$ be the Weyl group generated by standard generators $s_i\;(i\in I)$ of
simple reflections. 
Let $P_i$ be the standard maximal parabolic subgroup associated with $I\setminus \{i\}.$ We consider  
an element $\miw{i}$ in Weyl group $W$ of $G$ defined by
\begin{equation*}
w_\circ= \miw{i}\,w_{P_i},
\end{equation*} where
$w_\circ$ and $w_{P_i}$ are the longest elements of $W$ and  
$W_{P_i}=\langle s_j\mid j\in I\setminus \{i\}\rangle,$ respectively. 
We call $\miw{i}$ the \emph{Seidel element} corresponding to a special node $i$.

The class $\CO^{\miw{i}}$, for special $i$, is 
called a \emph{Seidel class} since it is related to 
the so-called Seidel representation of the fundemental group $\pi_1(G^\ad)$ of the group $G^\ad=G/Z(G)$ of adjoint type, where $Z(G)$ is the center of $G.$
The group $\pi_1(G^\ad)$ is isomorphic to the quotient group $\Sigma:=P^\lor/Q^\lor,$ where $P^\lor$ and $Q^\lor$ are the coweight lattice and the coroot lattce,  respectively.
\subsection{Seidel product formula for $QK_T(G/B)$}
There is a natural $W$-action on $QK_T(G/B)$, called the \emph{left action},
defined in the setting of the torus-equivariant quantum $K$-theory.
We refer to \cite[\S 8.3]{MNS} for details.
For \(u\in W\), we denote by \(u^L\) the corresponding ring automorphism of \(QK_T(G/B)\).
In particular, the action on the (opposite) Schubert basis is given by
\eqref{eq:W acts CO} below.

\begin{thm}\label{thm:main}
Let $i$ be a special (cominuscule) node of the Dynkin diagram of $G,$
and $\miw{i}$ the Seidel element corresponding to $i$. Then,  for any $w\in W,$ we have 
    $$
\CO^{\miw{i}}\cdot \miw{i}^L{\CO^w}=Q^{\piv_i-w^{-1}(\piv_i)}\CO^{\miw{i} w}
$$
 in torus-equivatiant quantum $K$-theory $QK_T(G/B)$. 
\end{thm}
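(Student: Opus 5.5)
The plan is to transport the identity through Kato's $K$-theoretic Peterson isomorphism into a computation inside the extended $K$-nil-Hecke (Peterson) algebra of Ikeda--Shimozono--Yamaguchi. Kato's isomorphism identifies a localization of $QK_T(G/B)$ with a localization of $K_T(\mathrm{Gr}_G)$, realized as a commutative subalgebra (the $K$-Peterson subalgebra) of the affine $K$-nil-Hecke ring. Under it, $Q^{\xi}\CO^w$ corresponds (up to normalization) to the affine Schubert class indexed by the translation element $w t_{\xi}$, for $\xi$ antidominant enough. The group $\Sigma=P^\lor/Q^\lor$ acts by length-zero elements $\pi_i\in\hat W_\af$. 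Each $\pi_i$ factors as $\pi_i = t_{\piv_i}\,u$ for a finite Weyl group element $u$ with $u$ related to $\miw{i}$ (explicitly $\pi_i=t_{\piv_i}\miw{i}^{-1}$ up to convention, or $\miw{i}t_{-\piv_i}$ after inversion). In the ISY extended algebra the elements $\pi_i$ are invertible, have length zero, and act on the affine $K$-Schubert basis by pure translation: $\pi_i\cdot \mathcal{O}_x=\mathcal{O}_{\pi_i x}$, twisted by the automorphism of the equivariant parameters given by the finite part of $\pi_i$.

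First, I would compute the image of the Seidel class $\CO^{\miw{i}}$ under Kato's map. The claim to check is that it is, up to a monomial $Q^{\ast}$, the image of $\pi_i$ itself, or equivalently of the affine Schubert class indexed by $\pi_i t_\lambda$ for a suitable antidominant $\lambda$. This uses the length additivity $\ell(\pi_i x)=\ell(x)$ and the fact that a length-zero element is a unit in the extended algebra. Second, I would write $\miw{i}^L\CO^w$ on the affine side. The left $W$-action should correspond to conjugation or left multiplication by finite Weyl group elements, twisting the equivariant parameters. I would use \eqref{eq:W acts CO} together with the fact that the Peterson map intertwines this action with the $W$-action on $K_T(\mathrm{Gr}_G)$ coming from the $T$-parameters. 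Third, I would multiply. Since $\pi_i$ acts as a translation on Schubert classes, the product becomes the single class indexed by $\pi_i\, w t_\xi$. Writing $\pi_i w t_\xi = \miw{i} w\, t_{\xi'}$ requires commuting $t_{\piv_i}$ past $w$, which gives $w\,t_{w^{-1}\piv_i}$. Tracking the resulting translations yields exactly the exponent $\piv_i-w^{-1}(\piv_i)$. Pulling back through Kato's isomorphism then gives $Q^{\piv_i-w^{-1}(\piv_i)}\CO^{\miw{i}w}$.

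The main obstacle is the second step together with the bookkeeping of twists. Kato's isomorphism is defined on a localization and involves normalizations by the classes $Q^{\xi}$. Here $\miw{i}w$ need not be reduced in the naive sense, and the equivariant parameters are twisted by the finite part of $\pi_i$. That twist is precisely why $\miw{i}^L$ appears on the second factor rather than $\CO^w$ itself. I would first verify the formula for $w=e$, where the claim is that $\CO^{\miw{i}}\cdot\miw{i}^L(1)=\CO^{\miw{i}}$. I would then check that the translation-element indices lie in the range where Kato's correspondence is a bijection on Schubert classes, choosing $\xi$ sufficiently antidominant so that $\xi$ and $\xi+w^{-1}\piv_i$ both qualify. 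Since the resulting identity holds in the localization and both sides are polynomial, it descends to $QK_T(G/B)$.
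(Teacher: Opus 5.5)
Your outline follows the same architecture as the paper's proof. On the affine side the Seidel class is $\bO^{\miw{i}}=\OGr_{\pi_i^{-1}}/\sigma_i$, which comes from $\pi_i^{-1}=\miw{i}t_{-\piv_i}$ and $\gamma_{\miw{i}}=-\piv_i$. The operator $\miw{i}^L$ is transported to the star action $\miw{i}*$ (Proposition~\ref{prop:W_commutes}). Multiplying by $\OGr_{\pi_i^{-1}}$ after applying $\miw{i}*$ is translation by $\pi_i^{-1}$, that is, $\OGr_{\pi_i^{-1}}(\miw{i}*\OGr_x)=\OGr_{\pi_i^{-1}x}$ (Lemma~\ref{lem:Sigma acts on L}).

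The genuine difference is in the normalization. The paper uses the canonical lift $wt_{\gamma_w}$ and proves the key Lemma~\ref{lem:key key}, $\gamma_{\miw{i}w}=\gamma_w-w^{-1}(\piv_i)$, by an inversion-set computation. This gives the exact identity $\pi_i^{-1}wt_{\gamma_w}=\miw{i}wt_{\gamma_{\miw{i}w}}$, so $\bO^{\miw{i}w}$ appears by definition. You instead take $\xi\in Q^\lor$ with $\langle\xi,\alpha_j\rangle\le -1$ for all $j\in I$. Then $wt_\xi\in W_\af^0$, $\pi_i^{-1}wt_\xi=\miw{i}wt_{\xi-w^{-1}(\piv_i)}\in\Wexgr$, and $\xi-w^{-1}(\piv_i)$ is antidominant by \eqref{eq:cominuscule}. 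This gives
\[
\bO^{\miw{i}}\,(\miw{i}*\bO^{w})=\frac{\OGr_{t_{\xi-w^{-1}(\piv_i)}}}{\sigma_i\,\OGr_{t_\xi}}\,\bO^{\miw{i}w},
\]
whose prefactor equals $\prod_j\sigma_j^{-\langle\piv_i-w^{-1}(\piv_i),\alpha_j\rangle}$ and maps to $Q^{\piv_i-w^{-1}(\piv_i)}$ by \eqref{eq:Qk}.

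This is valid and bypasses the descent-set lemma entirely. What it costs you is the independence $\OGr_{vt_\beta}/\OGr_{t_\beta}=\bO^v$ for antidominant $\beta\in P^\lor$, not just $\beta\in Q^\lor$. You need this because $\xi-w^{-1}(\piv_i)\equiv-\piv_i\not\equiv 0$ mod $Q^\lor$, so $\OGr_{\miw{i}wt_{\xi-w^{-1}(\piv_i)}}$ is not in $\Pet$, and Kato's map can only be applied to the ratio. The independence follows from Proposition~\ref{prop:t factor} by the same two-line argument as the proposition preceding Theorem~\ref{thm:Kato}. The paper's route buys a sharper structural fact in exchange: $\pi_i^{-1}$ permutes the canonical lifts $\{wt_{\gamma_w}\}$, and one gets an explicit formula for $\Des(\miw{i}w)$.

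When you write it out, fix the convention slip. With $\pi_i=t_{\piv_i}\miw{i}^{-1}$, the element $\pi_iwt_\xi$ has finite part $\miw{i}^{-1}w$, not $\miw{i}w$. You must translate by $\pi_i^{-1}=\miw{i}t_{-\piv_i}$, and the shifted coweight is $\xi-w^{-1}(\piv_i)$, not $\xi+w^{-1}(\piv_i)$.

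The step ``the product becomes a single class'' also rests on two facts you left implicit:
\begin{itemize}
\item The first is the identity of Lemma~\ref{lem:Sigma acts on L}. In it, $\miw{i}*$ is the full star action, which moves Schubert classes (cf.\ \eqref{eq:left on ell}); it is not merely a twist of equivariant parameters.
\item The second is $\miw{i}*\OGr_{t_\xi}=\OGr_{t_\xi}$. This holds because $\OGr_{t_\xi}$ is a monomial in the $\sigma_j$ (Proposition~\ref{prop:t factor}), and the $\sigma_j$ are $W$-invariant (Lemma~\ref{lem:W invariance of sigma}).
\end{itemize}
Finally, the $\piv_i$ in the exponent comes from the denominator $\sigma_i=\OGr_{t_{-\piv_i}}$ of the Seidel class. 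Your first step should state this explicitly rather than ``up to a monomial''.
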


This generalizes the cominuscule case  treated by Buch--Chaput--Perrin \cite{BCP}
to the full flag variety $G/B$.

\subsection{Parabolic case}
Let $P$ be a standard parabolic subgroup of $G$. 
Let $I_P$ be the subset of $I$ corresponding to $P.$
Let $W_P=\langle s_j\mid  j\in I_P\rangle$
be the subgroup corresponding to $P$, and 
let $W^P$ denote the set of minimal-length coset representatives of $W/W_P$. For each $w\in W^P$ there is the Schubert class $\COP^w$ in $QK_T(G/P).$
For $w\in W$, let $\minrep{w}$ denote the unique element of $W^P$ such that $wW_P=\minrep{w}W_P.$
 For $\beta \in Q^\lor$, 
define 
\begin{equation*}
\minrep{\beta}
=
\beta - \sum_{j \in I_P}
\langle \varpi_j^\lor, \beta \rangle \, \alpha_j^\lor .
\end{equation*}
\begin{cor}\label{cor:P}
In $QK_T(G/P)$, we have
\begin{equation*}
\COP^{\minrep{\miw{i}}}\cdot \miw{i}^L{\COP^w}=Q^{\minrep{\piv_i-w^{-1}(\piv_i)}}\COP^{\minrep{\miw{i} w}}
\quad\text{for $w\in W^P.$}
\end{equation*}
 \end{cor}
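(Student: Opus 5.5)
We derive Corollary \ref{cor:P} from Theorem \ref{thm:main} by pushing forward along the natural projection $\pi\colon G/B\to G/P$, using Kato's pushforward formula \cite{Kato2}. That formula says there is a surjective $K_T(\mathrm{pt})$-module map
\[
\Phi_P\colon QK_T(G/B)\to QK_T(G/P),\qquad
\Phi_P\bigl(Q^{\beta}\CO^{w}\bigr)=Q^{\minrep{\beta}}\COP^{\minrep{w}}\quad (w\in W,\ \beta\in Q^\lor),
\]
where the Novikov variables indexed by $I_P$ are specialized to $1$. Moreover, $\Phi_P$ is compatible with quantum multiplication by classes pulled back from $G/P$: for $u\in W^P$ and any $x$,
\[
\Phi_P(\CO^{u}\cdot x)=\COP^{u}\cdot\Phi_P(x).
\]

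The plan has three steps.

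\textbf{Step 1: the Seidel element lies in $W^P$ where needed.} We must check that $\CO^{\miw{i}}$ may be replaced by a class pulled back from $G/P$. In general $\miw{i}\notin W^P$. However, $\Phi_P$ sends $\CO^{\miw{i}}$ to $\COP^{\minrep{\miw{i}}}$, and we need multiplicativity with respect to this factor. So we first reduce to the case where $\miw{i}$ is replaced by $\minrep{\miw{i}}$. For this we use that $\CO^{\minrep{\miw{i}}}=\pi^*\COP^{\minrep{\miw{i}}}$. The point to verify is that $\Phi_P(\CO^{\miw{i}}\cdot y)=\COP^{\minrep{\miw{i}}}\cdot\Phi_P(y)$ for every $y$. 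I expect this to follow from Kato's result in its strong form: $\Phi_P$ is a ring-like map onto the quotient, and it is multiplicative for \emph{all} products (Kato shows $QK_T(G/P)$ is a quotient of $QK_T(G/B)$ via this map, as a ring). If Kato's statement is indeed that $\Phi_P$ is a surjective ring homomorphism, then Step 1 is immediate.

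\textbf{Step 2: compatibility with the left action.} We check that $\Phi_P$ intertwines the left $W$-actions, i.e.\ $\Phi_P(\miw{i}^L x)=\miw{i}^L\Phi_P(x)$. This follows from the formula \eqref{eq:W acts CO} for the action on the Schubert basis. That action acts only through $T$-equivariant parameters (twisting the coefficients by $\miw{i}$) and is defined geometrically by the left $G$-action. Since $\pi$ is $G$-equivariant, the same formula holds on both sides. Note also that $\CO^{w}$ for $w\in W^P$ is $\pi^*\COP^w$, so $\Phi_P(\CO^w)=\COP^w$.

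\textbf{Step 3: apply $\Phi_P$ to Theorem \ref{thm:main}.} Taking $w\in W^P$ in Theorem \ref{thm:main} and applying $\Phi_P$, the left side becomes $\COP^{\minrep{\miw{i}}}\cdot\miw{i}^L\COP^{w}$. The right side becomes $Q^{\minrep{\piv_i-w^{-1}(\piv_i)}}\COP^{\minrep{\miw{i}w}}$, directly from the defining formula for $\Phi_P$. This gives the corollary.

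The main obstacle is Step 1: justifying multiplicativity when the factor $\CO^{\miw{i}}$ is not pulled back from $G/P$. If Kato's theorem only gives linearity over pulled-back classes, I would fall back on the Peterson-isomorphism side. There I would show that the Seidel class corresponds to a translation element $t$ in the extended affine nil-Hecke algebra, whose image under the parabolic Peterson map is the parabolic Seidel class $\COP^{\minrep{\miw{i}}}$.
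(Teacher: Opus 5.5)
Your overall route is the paper's: push Theorem~\ref{thm:main} (with $w\in W^P$) forward along Kato's map $\pi_*$. You use that $\pi_*$ is multiplicative and commutes with $\miw{i}^L$, and read off the right side via $Q^\beta\mapsto Q^{\minrep{\beta}}$, which is legitimate since $\piv_i-w^{-1}(\piv_i)\in Q^\vee_{\ge 0}$. But you have placed the difficulty in the wrong step.

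Step~1 is not an issue. Proposition~\ref{prop:push} gives a surjective homomorphism of $R(T)$-algebras, so $\pi_*(\CO^{\miw{i}}\cdot y)=\COP^{\minrep{\miw{i}}}\cdot\pi_*(y)$ for every $y$. Neither the reduction to pulled-back classes nor the affine fallback is needed. The fallback is also misdescribed: $\bO^{\miw{i}}=\OGr_{\pi_i^{-1}}/\sigma_i$ with $\pi_i^{-1}=\miw{i}t_{-\piv_i}\in\Sigma$, which is not a translation.

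The step that actually needs an argument is Step~2, i.e.\ Proposition~\ref{prop:pi W}, and what you wrote does not establish it. The left action does not act only on equivariant parameters: by \eqref{eq:W acts CO}, $s_j^L\CO^w$ involves $\CO^{s_jw}$ when $s_jw<w$. Moreover, $G$-equivariance of $\pi$ by itself only yields commutation for the classical pushforward on $K_T$, not for Kato's map on $QK_T$.

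``The same formula on both sides'' does not suffice either. For $s_jw<w$, you must compare $\pi_*(s_j^L\CO^w)=e^{\alpha_j}\COP^{\minrep{w}}+(1-e^{\alpha_j})\COP^{\minrep{s_jw}}$ with \eqref{eq:def star on CO} applied to $\minrep{w}$. That comparison requires knowing how $\minrep{\cdot}$ interacts with left multiplication by $s_j$:
\begin{itemize}
\item $\minrep{s_jw}=\minrep{w}$ forces $s_j\minrep{w}>\minrep{w}$;
\item otherwise $s_j\minrep{w}=\minrep{s_jw}\in W^P$, and $s_j\minrep{w}<\minrep{w}$ holds exactly when $s_jw<w$.
\end{itemize}
The paper proves these facts by a case analysis resting on Lemma~\ref{lem:std}.

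Alternatively, your geometric intuition can be made rigorous. As a module map, Kato's $\pi_*$ is the classical pushforward $K_T(G/B)\to K_T(G/P)$ tensored with $Q^\beta\mapsto Q^{\minrep{\beta}}$, because classically $\pi_*\mathcal{O}_{X^w}=\mathcal{O}_{X_P^{\minrep{w}}}$. The left action is $\Z[Q]$-linear and given on Schubert classes by the classical formula, so it is the $\Z[Q]$-linear extension of the classical action induced by $x\mapsto\dot{u}x$ for $\dot{u}\in N(T)$. Since $\pi(\dot{u}x)=\dot{u}\,\pi(x)$, the classical pushforward commutes with $u^L$, and hence so does Kato's map. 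This is slicker than the paper's combinatorics but relies on the geometric descriptions of both maps. Either way, this step must be supplied for your proof to be complete.
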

This is the exact $K$-theoretic analogue of the Seidel product formula of
Chaput--Perrin \cite{CP}.
This corollary follows immediately from Theorem~\ref{thm:main} by applying Kato's
pushforward homomorphism
\[\pi_*:
QK_T(G/B)\longrightarrow QK_T(G/P)
\]
in equivariant quantum $K$-theory \cite{Kato2}.
In particular, in the cominuscule case $P=P_i$, this recovers the
non-equivariant formula of Buch--Chaput--Perrin \cite{BCP}.


\subsection{$K$-theoretic Peterson isomorphism}

Our proof relies on the ``quantum equals affine'' philosophy, which goes
back to Peterson \cite{Pet}.

Let $\Gr_G$ be the affine Grassmannian of $G$.
The $T$-equivariant $K$-homology $K_*^T(\Gr_G)$ is equipped with the
Pontryagin product (see \cite{LSS,Kumar:conj}).
A central ingredient is the $K$-theoretic Peterson isomorphism,
conjectured in \cite{LLMS} and established by Kato \cite{Kato}.
It yields an isomorphism of $R(T)$-algebras (see Theorem~\ref{thm:Kato})
\[
\Phi:
K_*^T(\Gr_G)_{\loc}\xrightarrow{\ \sim\ } QK_T(G/B)_{\loc}.
\]
Here the subscript $\loc$ indicates suitable localizations on both sides.

{\subsection{Extended $K$-theoretic Peterson algebra}

We also use an extended version of
the $K$-theoretic nil-Hecke algebra developed by
Ikeda--Shimozono--Yamaguchi \cite{ISY}.

Let $\hat{W}_{\af}=\Sigma\ltimes W_{\af}$ be the extended affine Weyl
group. In \cite{ISY}, an $R(T^{\ad})$-algebra $\hat{\mathbb{K}}_{\af}$ was constructed.
Inside $\hat{\mathbb{K}}_{\af}$, the authors define a commutative subalgebra
$\ePet$, called the extended $K$-theoretic Peterson algebra, which provides
an algebraic model for $K_*^{T^{\ad}}(\Gr_{G^{\ad}})$.
We refer to \cite{ISY} for precise definitions.
$\hat{\mathbb{K}}_{\af}$ acts on $\ePet$ naturally. 
This action incorporates the natural $\hat{W}_{\af}$-symmetry, in
particular the action of $\Sigma\simeq \pi_1(G^{\ad})$.
}

\subsection{Related works}
The 
Seidel product formula may be viewed as reflections of the hidden \emph{affine symmetry} of  Gromov-Witten invariants. Related symmetry phenomena in quantum Schubert calculus have been  investigated by several authors, including 
Chaput, Manivel, and Perrin \cite{CMP2010}, 
Agnihotri and Woodward \cite{AW}, 
Belkale \cite{B}, Postnikov \cite{P}.
Rooted in Seidel’s original construction, related connections with affine Schubert calculus were studied by Chow in (equivariant) quantum cohomology \cite{C} and by Chow and Leung in quantum 
$K$-theory \cite{CL}.



\subsection*{Organization}
The paper is organized as follows. In \S\ref{sec:ePet}, we review the extended $K$-Peterson algebra and its $K$-Peterson subalgebra, restricting ourselves to the minimal facts needed for the proof of the main theorem. In \S\ref{sec:proof}, we give the proof of our Seidel product formula, and its Corollary. In Appendix  \ref{sec:ePetA}, we present a review on the construction of the extended 
$K$-Peterson algebra.
Finally, in Appendix  \ref{sec:exa}, we illustrate the main results through several explicit examples.

\section{Extended K-theoretic Peterson algebra: Preliminaries}\label{sec:ePet}
We begin by reviewing the extended version of the 
$K$-theoretic Peterson algebra \cite{ISY,LSS}, restricting ourselves to the facts needed for the proof of the main result. Further details and additional constructions are deferred to Appendix \ref{sec:ePetA}.
Next, we recall the 
$K$-theoretic Peterson isomorphism,  due to Kato \cite{Kato}.  We also discuss some basic properties of the Seidel element 
$\miw{i}$ in \S \ref{sec:Seidel}. 

\subsection{Root systems}
Let $R$ be the root system of $(G,T)$. Let $G^\ad$ be the group of adjoint type associated with $R$. Let $I$ denote the Dynkin node sets. $P^\lor$ and $Q^\lor$ are the coweight lattice and coroot lattice,  respectively, with standard basis $\{\piv_i\mid i\in I\}$ and $\{\alpha_i^\lor\mid i\in I\}$
of the fundamental coweights, and the simple coroots, respectively.

\subsection{Extended affine Weyl group $\hat{W}_\af$}\label{ssec:Sigma}
Let $I_\af=I\cup\{0\}$ be the Dynkin node set of the untwisted affine type root system associated with the underlying root system $R.$
The extended affine Weyl group $\hat{W}_\af$ is generated by 
$\Sigma\cong P^\lor/Q^\lor$
and $W_\af=\langle s_i\mid i\in I_\af\rangle. $
 $\Sigma$ acts on the affine Dynkin node set $I_\af$ as automorphisms of the graph,  and for each special node $i$, there is a unique element in $\Sigma$ denoted by $\pi_i$, sending $0$ to $i$ 
(see 
\cite[\S 10.1]{LS:Acta}). 

Let $W_\af^0$ and $\hat{W}_\af^0$ be the set of affine Grassmannian elements for 
$W_\af$ and $\hat{W}_\af$,  respectively. We have $\hat{W}_\af^0=\Sigma \cdot W_\af^0.$
\subsection{Extended $K$-theoretic Peterson algebra}
\label{ssec:Extended Pet}
Let $\Pet\subset \mathbb{K}_\af$ and $\ePet\subset \hat{\mathbb{K}}_\af$ be the $K$-theoretic Peterson subalgebra and
its extended version.
We refer the construction of these algebras to \cite{ISY}, however, we give a brief review in Appendix  \ref{sec:ePetA} below, for the reader's convenience. 
$\Pet$ is a commutative $R(T)$-algebra, while
$\ePet$ is a commutative
$R(T^\ad)$-algebra. 
 $\Pet$ has a basis $\OGr_{x}\;(x\in {W}_\af^0)$ as an $R(T)$-module, while  
$\ePet$ has a basis $\OGr_{x}\;(x\in \hat{W}_\af^0)$ as an $R(T^\ad)$-module. 
There is an action of $\hat{\mathbb{K}}_\af$ on $\ePet$ called the \emph{star action} (see \cite[\S 2.4]{ISY}). In particular, 
the Weyl group $W$, which can be considered as a subset of $\hat{\mathbb{K}}_\af$ acts on $\ePet$ by the star action.


\begin{lem}[{\cite[Lemma 2.26, (2.61)]{ISY}}, cf. \cite{IIS}]
\label{lem:Sigma acts on L}
For $x\in \hat{W}_\af^0$ and  $\sigma=t_{\gamma_\sigma} u_\sigma\in \Sigma$,
with $\gamma_\sigma\in P^\lor$ and $u_\sigma\in W$, we have
\begin{equation}\label{eq:pi on ell}
\OGr_{\sigma x}=\OGr_\sigma( u_\sigma*\OGr_x),
\end{equation}
where $u_\sigma*\OGr_x$ denotes the the star action of $u_\sigma$ on $\OGr_x.$
In particular, for $\sigma,\sigma'\in \Sigma$, we have
\begin{equation}
\OGr_{\sigma\sigma'}=
\OGr_{\sigma}(u_{\sigma}*\OGr_{\sigma'}).
\label{eq: ell sigma sigma}
\end{equation}
\end{lem}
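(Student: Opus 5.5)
The plan is to use the realization of the extended $K$-Peterson algebra inside $\hat{\mathbb{K}}_\af$ recalled in Appendix~\ref{sec:ePetA}, with two ingredients.
\begin{itemize}
\item[(a)] $\ePet$ is the centralizer of $R(T^\ad)$ in $\hat{\mathbb{K}}_\af$ (more precisely, of the level-zero scalars $e^\lambda$).
\item[(b)] An element of $\ePet$ is determined by its image under the $R(T^\ad)$-linear ``Grassmannian projection'' $\phi_0\colon\hat{\mathbb{K}}_\af\to\bigoplus_{y\in\hat W_\af}R(T^\ad)\,T_y$. This map kills the right ideal generated by $T_i-\text{(scalar)}$ for $i\in I$, equivalently it evaluates the finite part at the identity. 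The basis element $\OGr_x$ is then characterized as the unique element of $\ePet$ with $\phi_0(\OGr_x)=T_x$, for $x\in\hat W_\af^0$.
\end{itemize}
In $\hat{\mathbb{K}}_\af$ the group $\Sigma$ sits as invertible elements satisfying $\sigma T_j\sigma^{-1}=T_{\sigma(j)}$ and $\sigma e^\lambda\sigma^{-1}=e^{u_\sigma\lambda}$, since the level-zero action of $t_{\gamma_\sigma}$ on $\lambda$ is trivial. The Weyl group $W$ sits as $u\mapsto$ the ``$e^\lambda$-twisting'' elements, with $ue^\lambda u^{-1}=e^{u\lambda}$. The star action of $u\in W$ on $\ePet$ is, as I expect from \cite[\S2.4]{ISY}, conjugation $u*a=u\,a\,u^{-1}$, which preserves the centralizer.

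The first step is to identify $\OGr_\sigma$. I will show that $\OGr_\sigma=\sigma u_\sigma^{-1}$, the translation element $t_{\gamma_\sigma}$. It commutes with every $e^\lambda$ because $\sigma u_\sigma^{-1}e^\lambda=\sigma e^{u_\sigma^{-1}\lambda}u_\sigma^{-1}=e^{\lambda}\sigma u_\sigma^{-1}$, so it lies in $\ePet$. Since $\sigma$ has length zero, its Grassmannian projection is $T_\sigma$, which gives $\phi_0(\sigma u_\sigma^{-1})=T_\sigma$. Uniqueness in (b) then forces the identification.

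The second step is the main computation. Using the first step and the conjugation description of the star action,
$$\OGr_\sigma(u_\sigma*\OGr_x)=\sigma u_\sigma^{-1}\,u_\sigma\OGr_x u_\sigma^{-1}=\sigma\,\OGr_x\,u_\sigma^{-1}.$$
The same one-line computation as above shows this lies in the centralizer, hence in $\ePet$. It remains to compute $\phi_0(\sigma\OGr_xu_\sigma^{-1})$. Left multiplication by $\sigma$ commutes with $\phi_0$ up to the twist $e^\lambda\mapsto e^{u_\sigma\lambda}$ on coefficients, and it sends $T_y\mapsto T_{\sigma y}$. Right multiplication by $u_\sigma^{-1}\in W$ should be absorbed by $\phi_0$, because $W$ lies in the finite nil-Hecke part and acts on the ``identity evaluation'' trivially, up to scalars that I must check equal $1$. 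Hence
$$\phi_0(\sigma\OGr_xu_\sigma^{-1})=\sigma\,\phi_0(\OGr_x)=T_{\sigma x},$$
and $\sigma x\in\hat W_\af^0$ because $\Sigma$ preserves $0$-Grassmannian cosets. Uniqueness then gives \eqref{eq:pi on ell}. Taking $x=\sigma'$ gives \eqref{eq: ell sigma sigma}.

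The main obstacle is the step where $\phi_0$ swallows the right factor $u_\sigma^{-1}$. The identification of the star action with plain conjugation, and the precise normalization of the finite Weyl group elements inside $\hat{\mathbb{K}}_\af$ (in $K$-theory they are $1+(\text{something})T_i$-type expressions, not bare $T_w$), must be checked against the conventions of \cite{ISY}. If conjugation is only correct up to a twist, I would instead define the star action as whatever makes $\sigma\,\OGr_x\,u_\sigma^{-1}=\OGr_\sigma(u_\sigma*\OGr_x)$ hold. I would then verify directly that it agrees with \cite[(2.61)]{ISY} on generators, and run the same uniqueness argument.
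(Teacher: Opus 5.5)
The paper gives no proof of this lemma; it quotes it from \cite{ISY}. So the comparison is with what the framework of the appendix gives directly. Your argument is correct, and the two points you left open both close exactly.

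Let $J=\sum_{i\in I}\hat{\mathbb{K}}_{\af}(D_i-1)$. This is a left ideal, generated by $D_i-1$ with no scalar shift, and $\hat{\mathbb{K}}_{\af}/J$ is free on the classes of $D_x$ for $x\in\hat{W}_{\af}^0$, not for all of $\hat{W}_{\af}$. In this framework the star action is characterized by $a*b\equiv ab \pmod J$ for $a\in\hat{\mathbb{K}}_{\af}$ and $b\in\ePet$. This is consistent with Proposition~\ref{prop:D acts on ell} and Lemma~\ref{lem:W invariance of sigma}.

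\begin{itemize}
\item \emph{Star action of $W$.} For $u\in W$ you have $ub=(ubu^{-1})u$ with $ubu^{-1}\in\ePet$. Hence $u*b=ubu^{-1}$ on the nose, as you expected.
\item \emph{Absorption of $u_\sigma^{-1}$.} For $i\in I$, $s_i-1=(1-e^{\alpha_i})(D_i-1)\in J$. Hence $a(u-1)\in J$ for every $a\in\hat{\mathbb{K}}_{\af}$ and $u\in W$. The right factor is therefore absorbed with no scalar correction, both in your first step and in the main computation.
\item \emph{Normalization.} With the paper's definition $\OGr_x=D_x*1$, the class of $\OGr_x$ modulo $J$ is that of $D_x$, not of the $0$-Hecke element $T_x$. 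This does not affect your argument, since $\sigma D_x=D_{\sigma x}$ by definition.
\end{itemize}
Your fallback of redefining the star action so that the identity holds would not be legitimate, but it is not needed.

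Your proof re-derives by hand, on representatives inside $\hat{\mathbb{K}}_{\af}$, the fact that $*$ is a module action. Using that fact directly gives a shorter route. Since $D_{\sigma x}=\sigma D_x$ and $\sigma=t_{\gamma_\sigma}u_\sigma$,
$$\OGr_{\sigma x}=(\sigma D_x)*1=\sigma*\OGr_x=t_{\gamma_\sigma}*(u_\sigma*\OGr_x)=t_{\gamma_\sigma}(u_\sigma*\OGr_x).$$
The last equality holds because $t_{\gamma_\sigma}\in\ePet$ acts by plain multiplication. Taking $x=e$, and using $u*1=1$ for $u\in W$ (the case $x=e$ of \eqref{eq:left on ell}), gives $\OGr_\sigma=t_{\gamma_\sigma}$, which is your first step.

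This route needs neither the explicit conjugation formula for the $W$-action nor the uniqueness argument via $\phi_0$. Your version has the merit of exhibiting the identity $\OGr_\sigma(u_\sigma*\OGr_x)=\sigma\,\OGr_x\,u_\sigma^{-1}$ concretely. That makes membership in the centralizer and the correct leading term transparent, at the cost of relying on more structure than is needed.
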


\begin{prop}[{\cite[Proposition 2.32]{ISY}}]\label{prop:t factor}
For anti-dominant $\gamma\in P^\lor$, 
and $x\in \hat{W}_\af^0$, we have
\begin{equation*}
\OGr_{xt_\gamma}=
\OGr_{x}\OGr_{t_\gamma}.
\end{equation*}
\end{prop}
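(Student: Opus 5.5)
The plan is to use the standard characterization of the basis $\{\OGr_x\}$ of $\ePet$, as in \cite{LSS,ISY}. Let $\mathcal{J}\subset\hat{\mathbb{K}}_\af$ be the left ideal $\sum_{j\in I}\hat{\mathbb{K}}_\af T_j$, where $T_w$ ($w\in\hat W_\af$) are the $0$-Hecke type generators. The characterization has two parts. First, $\OGr_x\equiv T_x \pmod{\mathcal{J}}$ for $x\in\hat W_\af^0$. Second, an element of $\ePet$ is uniquely determined by its class modulo $\mathcal{J}$: the map $\ePet\to\hat{\mathbb{K}}_\af/\mathcal{J}$ is injective, since its image is the free module spanned by the classes of $T_x$ for $x\in\hat W_\af^0$. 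Both products $\OGr_{xt_\gamma}$ and $\OGr_x\OGr_{t_\gamma}$ lie in $\ePet$, because $\ePet$ is a subalgebra. So it suffices to show
\[
\OGr_x\OGr_{t_\gamma}\equiv T_{xt_\gamma}\pmod{\mathcal{J}}.
\]

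Since $\mathcal{J}$ is a left ideal and $\OGr_{t_\gamma}\equiv T_{t_\gamma}$, we have $\OGr_x\OGr_{t_\gamma}\equiv \OGr_x T_{t_\gamma}$. Expand $\OGr_x=\sum_{w\in\hat W_\af}k_x^w T_w$ and write each $w$ uniquely as $w=yu$ with $y\in\hat W_\af^0$, $u\in W$ and $\ell(w)=\ell(y)+\ell(u)$. Then $T_w=T_yT_u$. The normalization of $\OGr_x$ gives $k_x^y=\delta_{xy}$ for Grassmannian $y$, so the proof reduces to the following two length facts for anti-dominant $\gamma$.

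\begin{enumerate}
\item[(a)] For $u\in W$ with $u\neq e$, we have $T_uT_{t_\gamma}\in\mathcal{J}$.
\item[(b)] For $y\in\hat W_\af^0$, we have $\ell(yt_\gamma)=\ell(y)+\ell(t_\gamma)$ and $yt_\gamma\in\hat W_\af^0$, so $T_yT_{t_\gamma}=T_{yt_\gamma}$.
\end{enumerate}

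Given (a) and (b), the whole sum collapses to $T_{xt_\gamma}$ modulo $\mathcal{J}$, and injectivity gives the proposition.

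For (a): when $\gamma$ is anti-dominant, $t_\gamma$ is the minimal element of $t_\gamma W$. Using the length formula for $\ell(ut_\gamma)$, one gets $\ell(ut_\gamma)=\ell(u)+\ell(t_\gamma)$, so $T_uT_{t_\gamma}=T_{ut_\gamma}$. Now $ut_\gamma=t_{u\gamma}u$. All translations by $W$-conjugates of $\gamma$ have the same length, so $ut_\gamma$ has length $\ell(t_{u\gamma})+\ell(u)$, which exceeds the length $\ell(t_\gamma)$ of the minimal element of its coset $ut_\gamma W$. Hence $ut_\gamma$ is not Grassmannian. It therefore factors as $zv$ with $v\in W\setminus\{e\}$ and lengths adding, which gives $T_{ut_\gamma}=T_zT_v\in\mathcal{J}$.

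For (b): the Grassmannian condition means that $y$ sends the dominant chamber region into positive affine roots in the appropriate sense. Combined with the anti-dominance of $\gamma$, the inversion set of $yt_\gamma$ should decompose as the disjoint union of that of $t_\gamma$ and $t_\gamma^{-1}$ applied to that of $y$. I would check this directly from the inversion-set description of length in $\hat W_\af$. The $\Sigma$-component of $y$ has length zero and $T_\sigma$ is multiplicative, so it suffices to treat $y\in W_\af^0$.

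I expect (b) to be the main obstacle. It requires careful sign bookkeeping with affine roots $\alpha+n\delta$ and the action of $t_\gamma$ on them, including the verification that $yt_\gamma$ is again Grassmannian. If the direct inversion-set argument becomes messy, I would instead use the characterization of $\hat W_\af^0$ as the set of minimal coset representatives together with the factorization $t_\gamma=t_{\gamma'}t_{\gamma''}$ for anti-dominant $\gamma',\gamma''$, and reduce to fundamental anti-dominant coweights.
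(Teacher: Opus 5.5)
\textbf{Step (a) is false, and the collapse of the sum depends on it.} The paper itself gives no proof here; it only quotes the result from ISY.

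Minimality of $t_\gamma$ in $t_\gamma W$ controls \emph{right} multiplication: $\ell(t_\gamma u)=\ell(t_\gamma)+\ell(u)$. It says nothing about left multiplication. In the conventions of this paper (where $wt_\beta\in W_\af^0$ for suitable anti-dominant $\beta$), one has $\ell(ut_\gamma)=\ell(u)+\ell(t_\gamma)$ only for $u$ in the stabilizer $W_\gamma$ of $\gamma$. For other $u$ the length strictly drops below $\ell(u)+\ell(t_\gamma)$. For instance, $\miw{i}t_{-\piv_i}=\pi_i^{-1}$ has length $0$ by \eqref{eq:pi inverse}. For regular $\gamma$, every $ut_\gamma$ is itself in $\hat W_\af^0$.

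Concretely, if $\langle\gamma,\alpha_i\rangle<0$ then $s_it_\gamma<t_\gamma$. So $T_iT_{t_\gamma}=-T_{t_\gamma}$ (with $T_i=D_i-1$), resp.\ $D_iD_{t_\gamma}=D_{t_\gamma}$. Neither lies in $\mathcal{J}$, because $t_\gamma\in\hat W_\af^0$. Hence the non-Grassmannian terms of $\OGr_x$ do not die in $\OGr_xT_{t_\gamma}$ modulo $\mathcal{J}$. They contribute terms $\pm k_x^{yu}T_{yt_\gamma}$, which the normalization $k_x^y=\delta_{xy}$ on Grassmannian $y$ does not control.

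There is also a normalization mismatch: the characterization you quote is not the one for this basis. Since $\OGr_x=D_x*1$ and $D_j*1=1$ for $j\in I$ (Proposition~\ref{prop:D acts on ell} with $x=e$), the relevant ideal is $\mathcal{J}=\sum_{j\in I}\hat{\mathbb{K}}_\af(D_j-1)$. Modulo this ideal, $\OGr_x\equiv D_x=\sum_{v\le x}T_v$. The condition $\OGr_x\equiv T_x$ singles out a different basis.

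\textbf{How to repair it.} Your framework works once you use the idempotent generators. The ideal $\mathcal{J}$ is spanned by the elements $D_{yu}-D_y$ with $y\in\hat W_\af^0$ and $u\in W$. So if you write $\OGr_x=\sum_{y,u}c_{yu}D_yD_u$, the characterization says exactly $\sum_{u\in W}c_{yu}=\delta_{xy}$. It controls coset sums, not individual coefficients.

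Replace (a) by the following: for $i\in I$,
\[
D_iD_{t_\gamma}=D_{t_\gamma}\ \text{ if } \langle\gamma,\alpha_i\rangle<0,
\qquad
D_iD_{t_\gamma}=D_{t_\gamma s_i}=D_{t_\gamma}D_i\ \text{ if } \langle\gamma,\alpha_i\rangle=0 .
\]
Hence $D_uD_{t_\gamma}=D_{t_\gamma}b$ with $b$ a product of $D_j$'s ($j\in I$), so $aD_uD_{t_\gamma}\equiv aD_{t_\gamma}\pmod{\mathcal{J}}$ for every $a$. Combined with (b) this gives
\[
\OGr_x\OGr_{t_\gamma}\equiv\OGr_xD_{t_\gamma}\equiv\sum_{y\in\hat W_\af^0}\Bigl(\sum_{u\in W}c_{yu}\Bigr)D_{yt_\gamma}=D_{xt_\gamma}\pmod{\mathcal{J}},
\]
and injectivity finishes the proof.

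Part (b), which you flagged as the main obstacle, is routine. Take $y=wt_\nu\in\hat W_\af^0$, with $\nu\in P^\lor$ anti-dominant and $w\alpha_j>0$ whenever $\langle\nu,\alpha_j\rangle=0$. Then $yt_\gamma=wt_{\nu+\gamma}$ satisfies the same criterion. The formula $\ell(wt_\nu)=\ell(t_\nu)-\ell(w)$, together with additivity of $\ell(t_\nu)$ on anti-dominant $\nu$, gives $\ell(yt_\gamma)=\ell(y)+\ell(t_\gamma)$.
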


For $i\in I,$
define $
\sigma_i=\ell_{t_{-\piv_i}}\in \ePet$.

\begin{lem}[{\cite[Lemma 2.30]{ISY}}]
\label{lem:W invariance of sigma}
For all $i\in I,$ $\sigma_i$ is invariant under the star action of $W.$ 
\end{lem}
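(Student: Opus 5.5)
The plan is to identify $\sigma_i=\OGr_{t_{-\piv_i}}$ with an explicit element of $\hat{\mathbb{K}}_\af$ that is manifestly $W$-symmetric. I take the star action of $w\in W$ on the commutative subalgebra $\ePet$ to be conjugation, $w*a=w\,a\,w^{-1}$, computed inside $\hat{\mathbb{K}}_\af$. This is how it is set up in \cite[\S 2.4]{ISY}; if instead it is defined by an explicit twisted formula, I would first check that the two agree on $\ePet$. Since $w\,t_\mu\,w^{-1}=t_{w\mu}$ for any translation $t_\mu$ with $\mu\in P^\lor$, any element of the form $\sum_{\mu\in W\lambda} t_\mu$ is fixed by conjugation by $W$. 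The whole proof therefore reduces to the following claim.

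\emph{Claim.} For $\lambda=-\piv_i$, which is anti-dominant, we have
\[
\OGr_{t_\lambda}=\sum_{\mu\in W\lambda} t_\mu \quad\text{in }\hat{\mathbb{K}}_\af .
\]
This is the extended-affine analogue of the formula for translation elements in the $K$-Peterson algebra of \cite{LSS}, and its homology analogue is due to Peterson and Lam--Shimozono.

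To prove the Claim, I would use the characterization of the basis $\{\OGr_x\}$ of $\ePet$ from \cite{ISY}. The element $\OGr_x$ is the unique element of the centralizer of $R(T^\ad)$ in $\hat{\mathbb{K}}_\af$ (the centralizer being $\ePet$) whose expansion in the $K$-nil-Hecke basis has coefficient $\delta_{x,y}$ at each affine Grassmannian element $y\in\hat{W}_\af^0$. I would check two things for $Z:=\sum_{\mu\in W\lambda}t_\mu$.

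\begin{enumerate}
\item \emph{$Z$ lies in the centralizer.} For $f\in R(T^\ad)$ we have $t_\mu f=(t_\mu\cdot f)\,t_\mu$, and the level-zero action of $t_\mu$ on $R(T^\ad)$ is trivial. Hence each $t_\mu$ already commutes with $R(T^\ad)$.
\item \emph{$Z$ has the correct Grassmannian coefficients.} I would expand each $t_\mu$ in the $K$-nil-Hecke basis $\{T_y\}$. Exactly one orbit element $\mu$ gives a translation $t_\mu$ that is affine Grassmannian, namely $t_\lambda$ itself, because $\lambda$ is anti-dominant. The terms of the expansion of $t_\lambda$ other than $T_{t_\lambda}$, together with the expansions of the remaining $t_{w\lambda}$, should contribute no affine Grassmannian indices.
\end{enumerate}

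The coefficient vanishing in (2) is the expected main obstacle. Each $t_\mu$ expands in $\hat{\mathbb{K}}_\af$ as a product of factors $(1+T_j)$ or $T_j$ along a reduced word, so a single $t_\mu$ will have many lower Grassmannian terms. The vanishing must therefore come from cancellation across the whole $W$-orbit. I would first try the argument of \cite{LSS} for $\mathbb{K}_\af$. This reduces to the statement that the sum over the orbit is annihilated by the ideal of $\hat{\mathbb{K}}_\af$ killing $\OGr$'s with non-Grassmannian support, using only the length-additivity $\ell(t_\lambda w)=\ell(t_\lambda)+\ell(w)$ valid for anti-dominant $\lambda$.

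To transport that argument to the extended setting, where $\piv_i\notin Q^\lor$, I would write $t_{-\piv_i}=\tau\,y$ with $\tau\in\Sigma$ and $y\in W_\af$. Then Lemma~\ref{lem:Sigma acts on L} and Proposition~\ref{prop:t factor} let me move the $\Sigma$-part out, reducing the coefficient computation to one inside $\mathbb{K}_\af$.

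A fallback is to use only Lemma~\ref{lem:Sigma acts on L} and Proposition~\ref{prop:t factor}, without the explicit formula. In that approach I would compare $s_j*\OGr_{t_{-\piv_i}}$ with $\OGr_{t_{-\piv_i}}$ through their pairings with the dual $K$-cohomology Schubert basis. This would reduce invariance to the equality $s_j t_{-\piv_i}s_j=t_{-s_j\piv_i}$ together with uniqueness of the element with the prescribed pairings. I expect the explicit-formula route to be cleaner.
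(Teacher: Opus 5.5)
\paragraph{There is a genuine gap.} Your main route depends on the Claim $\OGr_{t_{-\piv_i}}=\sum_{\mu\in W(-\piv_i)}t_\mu$, and that Claim is false. No cancellation in step (2) can produce the required coefficients.

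\emph{The general orbit-sum principle contradicts Proposition~\ref{prop:t factor}.} In type $A_1$, take $x=t_{-\piv_1}\in\Wexgr$ and $\gamma=-\piv_1$. The proposition gives $\OGr_{t_{-2\piv_1}}=\sigma_1^2$. But
\[
(t_{-\piv_1}+t_{\piv_1})^2=t_{-2\piv_1}+t_{2\piv_1}+2\neq t_{-2\piv_1}+t_{2\piv_1},
\]
so orbit sums cannot give $\OGr_{t_\lambda}$ for all antidominant $\lambda$.

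\emph{The special case $\lambda=-\piv_i$ fails too.} Use the standard realization of the star action, which your step~1 presupposes: $\ePet\cong\hat{\mathbb{K}}_\af/\sum_{j\in I}\hat{\mathbb{K}}_\af(D_j-1)$, with $\hat{\mathbb{K}}_\af$ acting by left multiplication. Then $W$ does act on $\ePet$ by conjugation, and $\OGr_x$ is the unique element of $\ePet$ congruent to $D_x$ modulo that left ideal. A direct computation in type $A_1$ gives
\[
\sigma_1=\frac{t_{-\piv_1}-e^{\alpha_1}t_{\piv_1}}{1-e^{\alpha_1}}=\sum_{w\in W}w\,\frac{t_{-\piv_1}}{1-e^{\alpha_1}}\,w^{-1}.
\]
This is $W$-invariant, but it is a localization-type symmetrization with essential denominators, not the orbit sum.

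\emph{The uniqueness characterization you quote is also wrong for $\OGr_x=D_x*1$.} It is not the coefficient at Grassmannian $y$ that equals $\delta_{x,y}$; it is the sum of the $D$-coefficients over each coset $yW$. For example, in type $A_1$,
\[
\OGr_{t_{-\alpha_1^\vee}}=D_{s_1s_0}+D_{s_0s_1}-D_{s_0}-D_{s_1}+1 .
\]

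\paragraph{What actually proves the lemma.} The paper only cites \cite{ISY}, but the lemma follows in a few lines from \eqref{eq:left on ell}. For $j\in I$ and $x\in\Wexgr$, that formula says $s_j*\OGr_x=\OGr_x$ unless both $s_jx>x$ and $s_jx\in\Wexgr$. Take $x=t_{-\piv_i}$.
\begin{itemize}
\item If $j\neq i$, then $s_j\piv_i=\piv_i$, so $s_jx=t_{-s_j\piv_i}s_j=xs_j$. Since $x\in\Wexgr$, $xs_j$ has the right descent $j\in I$, so $s_jx\notin\Wexgr$.
\item If $j=i$, then $s_ix=t_{-\piv_i+\alpha_i^\vee}s_i$. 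Use the length formula: the length of $wt_\lambda$ is $\sum_{\alpha\in R^+}\bigl|\langle\lambda,\alpha\rangle+\epsilon_\alpha\bigr|$, where $\epsilon_\alpha=1$ if $w\alpha<0$ and $0$ otherwise. Passing from $x$ to $s_ix$ changes only the term for $\alpha=\alpha_i$, which drops from $1$ to $0$. Hence $s_ix<x$.
\end{itemize}
In both cases the exceptional branch of \eqref{eq:left on ell} does not apply, so $s_j*\sigma_i=\sigma_i$ for every $j\in I$. This gives $W$-invariance; it is the case $w=e$ of Lemma~\ref{lem:CP}, transported to $\hat W_\af$.

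Your identification of the star action with conjugation is correct but not needed. Your fallback would also have to go through \eqref{eq:left on ell}: the identity $s_jt_{-\piv_i}s_j=t_{-s_j\piv_i}$ holds for every translation and says nothing by itself about how $s_j$ acts on $\OGr_{t_{-\piv_i}}$.
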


\subsection{$K$-theoretic Peterson isomorphism }

For $w\in {W}$, let 
$\mathrm{Des}(w)
=\{i\in I\mid \OGr(ws_i)=\OGr(w)-1\},$
be the right descent set of $w$, 
and 
\begin{equation}
\gamma_w=-\sum_{j\in \Des(w)}\piv_j\in P^\lor.
\end{equation}
Then we have $wt_{\gamma_{x}} \in \hat{W}_{\af}$ (see \cite[Lemma 2.33]{ISY}).
For $w\in W$, define
the element
\begin{equation}
\bO^w:=
\frac{\OGr_{w t_{{\gamma_w}}}}{\OGr_{t_{{\gamma_w}}}}\in (\ePet)_{\loc}.
\end{equation}
\begin{prop} Let $w\in W$ and $\beta\in Q^\lor$ be anti-dominant so that 
$wt_\beta\in W_\af^0.$
Then 
\begin{equation*}
\frac{\OGr_{wt_\beta}}{\OGr_{t_\beta}}=\bO^w.
\end{equation*}
In particular, we have $\bO^w\in (\Pet)_{\loc}$.
\end{prop}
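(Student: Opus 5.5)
The plan is to compare both $\OGr_{wt_\beta}$ and $\OGr_{t_\beta}$ with the corresponding elements for $\gamma_w$ by factoring off an anti-dominant translation with Proposition~\ref{prop:t factor}. The key preliminary claim is that $\beta-\gamma_w$ is anti-dominant.

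To justify it, recall the standard characterization of affine Grassmannian elements: $wt_\beta\in \hat W_\af^0$, with $\beta$ anti-dominant, holds exactly when, for each $j\in I$, we have $\langle \beta,\alpha_j\rangle\le -1$ whenever $j\in\Des(w)$ and $\langle\beta,\alpha_j\rangle\le 0$ otherwise. Equivalently, $\langle \beta,\alpha_j\rangle\le \langle\gamma_w,\alpha_j\rangle$ for all $j$, since $\langle\gamma_w,\alpha_j\rangle=-1$ if $j\in \Des(w)$ and $0$ otherwise. Hence $\delta:=\beta-\gamma_w\in P^\lor$ satisfies $\langle\delta,\alpha_j\rangle\le 0$ for all $j$, so $\delta$ is anti-dominant. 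I would verify this characterization by computing $wt_\beta(\alpha_j)$ for $j\in I$ (and $\alpha_0$) and checking positivity; this is essentially the content of \cite[Lemma 2.33]{ISY}, which I would cite.

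Next, $wt_\beta=(wt_{\gamma_w})t_\delta$, with $wt_{\gamma_w}\in\hat W_\af^0$ and $\delta$ anti-dominant. Proposition~\ref{prop:t factor} then gives
\begin{equation*}
\OGr_{wt_\beta}=\OGr_{wt_{\gamma_w}}\OGr_{t_\delta}.
\end{equation*}
Likewise $t_\beta=t_{\gamma_w}t_\delta$ with $t_{\gamma_w}\in\hat W_\af^0$, because $\gamma_w$ is anti-dominant, so
\begin{equation*}
\OGr_{t_\beta}=\OGr_{t_{\gamma_w}}\OGr_{t_\delta}.
\end{equation*}
Dividing in the localization, where $\OGr_{t_\delta}$ is invertible as a product of the $\sigma_j$, yields $\OGr_{wt_\beta}/\OGr_{t_\beta}=\bO^w$.

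The final claim, $\bO^w\in(\Pet)_\loc$, follows by choosing such a $\beta\in Q^\lor$. For example, take $\beta=N\cdot(-2\rho^\lor)$ shifted suitably, or any sufficiently anti-dominant coroot-lattice element satisfying the strict inequalities; this is possible because $Q^\lor$ has finite index in $P^\lor$. Then $wt_\beta,t_\beta\in W_\af^0$, so both $\OGr$'s lie in $\Pet$, and the ratio lies in $(\Pet)_\loc$.

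The main obstacle is the Grassmannian-condition step, namely the sign conventions for when $wt_\beta$ is affine Grassmannian. I would settle it first by the root computation above.
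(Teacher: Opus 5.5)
Your proof is correct, but it is organized differently from the paper's.

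\textbf{Your route.} You compare $\beta$ with $\gamma_w$ directly. Using the characterization of $\hat W_\af^0$, you show that $\delta=\beta-\gamma_w$ is anti-dominant. You then factor $wt_\beta=(wt_{\gamma_w})t_\delta$ and $t_\beta=t_{\gamma_w}t_\delta$, and apply Proposition~\ref{prop:t factor} to each.

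\textbf{The paper's route.} The paper never compares $\beta$ with $\gamma_w$. It applies Proposition~\ref{prop:t factor} ``crosswise'': once with $x=wt_\beta$, $\gamma=\gamma_w$, and once with $x=wt_{\gamma_w}$, $\gamma=\beta$. Using $t_\beta t_{\gamma_w}=t_{\gamma_w}t_\beta$, this gives $\OGr_{wt_\beta}\OGr_{t_{\gamma_w}}=\OGr_{wt_{\gamma_w}}\OGr_{t_\beta}$ at once.

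\textbf{What each buys.} The paper needs only the stated hypotheses plus $wt_{\gamma_w}\in\hat W_\af^0$, which is already required to define $\bO^w$. Your argument additionally uses the ``only if'' half of the characterization: that $wt_\beta\in W_\af^0$ forces $\langle\beta,\alpha_j\rangle\le -1$ for $j\in\Des(w)$. In exchange, you get a sharper identity before localizing, namely $\OGr_{wt_\beta}=\OGr_{wt_{\gamma_w}}\OGr_{t_{\beta-\gamma_w}}$ in $\ePet$. This exhibits $\gamma_w$ as the least anti-dominant admissible translation. It also shows that every $\OGr_{wt_\beta}$ is $\OGr_{wt_{\gamma_w}}$ times a monomial in the $\sigma_j$.

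\textbf{Two minor points.}
\begin{itemize}
\item When verifying the characterization, only the roots $\alpha_j$ with $j\in I$ enter. Membership in $\hat W_\af^0$ concerns right multiplication by $W$, so $\alpha_0$ plays no role.
\item For the final claim no shifting is needed. Already $\beta=-\sum_{\alpha\in R^+}\alpha^\vee\in Q^\vee$ pairs to $-2$ with every simple root, so $wt_\beta\in W_\af^0$ for all $w$. The paper leaves this existence point implicit.
\end{itemize}
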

\begin{proof}
By using Proposition \ref{prop:t factor}, we have
$
\OGr_{wt_\beta}
\OGr_{t_{\gamma_w}}
=\OGr_{wt_{\beta}t_{\gamma_w}}
=\OGr_{wt_{\gamma_w}t_{\beta}}
=\OGr_{wt_{{\gamma_w}}}{\OGr_{t_\beta}}.
$
Hence the proposition holds.
\end{proof}


We use the following identification due to Lam, Schilling, and Shimozono \cite{LSS}: 
\begin{equation*}
K_*^T(\Gr_G)\cong
\Pet,\quad 
K_*^{T}(\Gr_{G})_\loc\cong
(\Pet)_\loc.
\end{equation*}
\begin{thm}[Kato \cite{Kato}, see also \cite{CL}, \cite{LLMS}]\label{thm:Kato}
There is an injective  homomorphism of $R(T)$-algebra 
$\Phi: K_*^T(\Gr_G)_\loc\rightarrow QK_T(G/B)_{\mathrm{loc}}$ such that
\begin{align}
\OGr_{t_\beta}&\mapsto Q^{\beta}\quad \text{for anti-dominant $\beta\in Q^\lor$,}\label{eq:Phi ell}
\\
\bO^w&\mapsto \CO^w\quad\text{for $w\in W$}.
\label{eq:Phi Q}
\end{align}
In particular, we have
\begin{equation}
\label{eq:Qk}
{\prod_{j\in I}\sigma_j^{-\langle \alpha_k^\lor,\alpha_j\rangle}}\mapsto Q_k\quad (k\in I).
\end{equation}
\end{thm}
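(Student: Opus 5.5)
The assignments \eqref{eq:Phi ell} and \eqref{eq:Phi Q} are Kato's theorem, and I take them as given. What remains is the consequence \eqref{eq:Qk}. The plan is to write $Q_k=Q^{\alpha_k^\lor}$ as a ratio $Q^{\beta+\alpha_k^\lor}/Q^{\beta}$ of two quantum monomials with anti-dominant exponents. I then compute both preimages under $\Phi$ as monomials in the $\sigma_j$, using Proposition~\ref{prop:t factor}.

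\emph{Step 1: factor $\OGr_{t_\gamma}$ into the $\sigma_j$.} For anti-dominant $\gamma=-\sum_{j\in I}m_j\piv_j\in P^\lor$ (so all $m_j\ge 0$), I claim
\[
\OGr_{t_\gamma}=\prod_{j\in I}\sigma_j^{m_j}.
\]
First, $t_\gamma$ lies in $\hat{W}_\af^0$ when $\gamma$ is anti-dominant; this is the standard description of the translations that are affine Grassmannian. Now let $\gamma'$ be another anti-dominant coweight. Applying Proposition~\ref{prop:t factor} with $x=t_{\gamma'}$ gives
\[
\OGr_{t_{\gamma'+\gamma}}=\OGr_{t_{\gamma'}}\OGr_{t_\gamma}.
\]
Induction on $\sum_j m_j$, starting from $\OGr_{t_{-\piv_j}}=\sigma_j$ and $\OGr_{t_0}=\OGr_{\mathrm{id}}=1$, yields the claim.

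\emph{Step 2: choose a deep anti-dominant $\beta$.} Write $\alpha_k^\lor=\sum_j\langle\alpha_k^\lor,\alpha_j\rangle\piv_j$. Choose $\beta\in Q^\lor$ anti-dominant and sufficiently deep, for example a large multiple of $-2\rho^\lor$ or of a suitable anti-dominant coroot combination. The requirement is that $\beta+\alpha_k^\lor$ is still anti-dominant. Writing $\beta=-\sum_j m_j\piv_j$, we then have
\[
\beta+\alpha_k^\lor=-\sum_j\bigl(m_j-\langle\alpha_k^\lor,\alpha_j\rangle\bigr)\piv_j,
\]
with all coefficients nonnegative. Both translations lie in $W_\af^0$, so both $\OGr$'s lie in $\Pet$.

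\emph{Step 3: apply $\Phi$.} By \eqref{eq:Phi ell}, $\Phi(\OGr_{t_\beta})=Q^{\beta}$ and $\Phi(\OGr_{t_{\beta+\alpha_k^\lor}})=Q^{\beta+\alpha_k^\lor}$. In the localization, $\OGr_{t_\beta}$ is invertible and $Q^{\beta}$ is a Laurent monomial. Since $\Phi$ is a ring homomorphism,
\[
\Phi\Bigl(\frac{\OGr_{t_{\beta+\alpha_k^\lor}}}{\OGr_{t_\beta}}\Bigr)=Q^{\alpha_k^\lor}=Q_k.
\]
By Step 1, the ratio on the left equals $\prod_j\sigma_j^{-\langle\alpha_k^\lor,\alpha_j\rangle}$, which gives \eqref{eq:Qk}.

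The main obstacle is bookkeeping rather than mathematics. One must check that $t_\gamma\in\hat{W}_\af^0$ for anti-dominant $\gamma$, so that Proposition~\ref{prop:t factor} applies with $x=t_{\gamma'}$. One must also check that the localization used in Theorem~\ref{thm:Kato} inverts the elements $\OGr_{t_\beta}$, so that negative powers of the $\sigma_j$ make sense. I would settle both by citing the conventions in \cite{ISY,Kato}.
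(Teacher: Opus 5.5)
Your proposal is correct and is the derivation the paper leaves implicit. The paper cites Kato for \eqref{eq:Phi ell}--\eqref{eq:Phi Q} and treats \eqref{eq:Qk} as an immediate consequence, which is exactly your argument: factor $\OGr_{t_\gamma}=\prod_j\sigma_j^{m_j}$ via Proposition~\ref{prop:t factor}, then apply $\Phi$ to $\OGr_{t_{\beta+\alpha_k^\lor}}/\OGr_{t_\beta}$ for a sufficiently anti-dominant $\beta\in Q^\lor$ (minus the sum of all positive coroots already works, since $\langle\alpha_k^\lor,\alpha_j\rangle\le 2$).
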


Here we follow \cite[Conjecture 2]{LLMS}
for convention of the map $\Phi,$ which differs from that of \cite{Kato} due to a different choice of Borel subgroups in the definition of the Schubert varieties in 
\(G/B\) and $\Gr_G$.

The following fact generalizes 
a result by Chaput and Perrin \cite[Proposition 6.8]{CP} for the original (homology) Peterson isomorphism.

\begin{prop}\label{prop:W_commutes}
The star action of $W$ on $K_*^T(\Gr_G)\cong \Pet$ 
is compatible with the left action of $W$ on $QK_T(G/B),$ via the map $\Phi$ i.e.,
\begin{equation*}
    w^L\circ \Phi=\Phi\circ w\!*
    \quad\text{for $w\in W$ }. 
\end{equation*}
\end{prop}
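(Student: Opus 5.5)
We need to prove $w^L\circ\Phi=\Phi\circ w*$ on $K_*^T(\Gr_G)_\loc$. Both sides are ring maps that are semilinear in the same way over $R(T)$: $w*$ acts on $R(T)$ coefficients by $w$, and so does $w^L$ (cf.\ \cite[\S 8.3]{MNS}). So it suffices to check the identity on $R(T)$-algebra generators of $(\Pet)_\loc$, and we may reduce to simple reflections $w=s_k$. Convenient generators are the elements $\bO^u$ ($u\in W$), which map to the Schubert classes $\CO^u$ under \eqref{eq:Phi Q}, together with the inverted elements $\OGr_{t_\beta}$, which map to $Q^\beta$ under \eqref{eq:Phi ell}.

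First I will handle the quantum parameters. By Lemma~\ref{lem:W invariance of sigma}, each $\sigma_j$ is $W$-invariant under the star action. Hence by \eqref{eq:Qk} the preimage of $Q_k$ is star-invariant. On the other side, $w^L$ fixes each $Q_k$, since the left action is $Q$-linear. The same argument covers $\OGr_{t_\beta}$ for anti-dominant $\beta$, via Proposition~\ref{prop:t factor}: this element factors as a product of $\OGr_{t_{-\piv_j}}=\sigma_j$, which are all invariant. So the identity holds on the localized parameters.

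Next I treat the Schubert classes. I need the explicit formula \eqref{eq:W acts CO} for $s_k^L\CO^u$. In the equivariant $K$-theory setting I expect it to have the shape
\[
s_k^L\CO^u=\begin{cases}\CO^u & \text{if } s_ku>u,\\ e^{\cdot}\CO^u+(1-e^{\cdot})\CO^{s_ku}\ \text{(Demazure type)} & \text{otherwise},\end{cases}
\]
with the appropriate root characters. On the affine side, I will compute $s_k*\OGr_x$ for $x=ut_{\gamma_u}\in\hat W_\af^0$ from the definition of the star action in \cite[\S2.4]{ISY}. This action is left multiplication by the Demazure-type element $T_k$, or by $s_k$ itself, twisted on coefficients. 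It acts on the affine Grassmannian basis $\OGr_x$ by the same two-case rule, according to whether $s_kx<x$. The point is that left multiplication by $s_k$ with $k\in I$ commutes with the translation factor, so $s_k\,ut_{\gamma_u}$ is $s_ku\,t_{\gamma_u}$.

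There is one wrinkle: $\gamma_{s_ku}$ may differ from $\gamma_u$. To handle it, I will use the Proposition just before Theorem~\ref{thm:Kato}, which lets me replace $\gamma_u$ by any sufficiently anti-dominant $\beta$ that works for both $u$ and $s_ku$. Dividing by the star-invariant denominator $\OGr_{t_\beta}$ then gives $s_k*\bO^u$ in the same form as $s_k^L\CO^u$, and applying $\Phi$ finishes the argument.

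The main obstacle is matching conventions. The star action must twist coefficients and length conditions in exactly the way the left action does, which depends on the LLMS convention for $\Phi$ with opposite Borels. I will pin this down by checking rank one, $G=SL_2$, explicitly before carrying out the general computation.
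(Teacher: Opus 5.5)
\paragraph{A gap in matching the two case distinctions.}
Your skeleton is the paper's: reduce to $s_k$; note that $\OGr_{t_\beta}$ is a product of the star-invariant $\sigma_j$ and maps to the $W$-fixed $Q^\beta$; then compare \eqref{eq:left on ell} with \eqref{eq:W acts CO}. Choosing one anti-dominant $\beta$ that serves both $u$ and $s_ku$ is also fine, since a regular anti-dominant $\beta\in Q^\lor$ puts every $vt_\beta$ ($v\in W$) in $W_\af^0$.

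The problem is the one step with real content, matching the two case distinctions. You leave it open, and as you phrase it, it points the wrong way. By \eqref{eq:left on ell}, $s_k*\OGr_x$ picks up the extra term exactly when $s_kx>x$ and $s_kx\in\hat{W}_\af^0$, i.e.\ when you go \emph{up} in the affine Bruhat order. By \eqref{eq:W acts CO}, $s_k^L\CO^u$ picks up the extra term exactly when $s_ku<u$, i.e.\ going \emph{down}. So it is not ``the same two-case rule, according to whether $s_kx<x$''. If you pair $s_kx<x$ with $s_ku<u$ through $s_k(ut_\beta)=(s_ku)t_\beta$, the Demazure term appears in exactly the wrong cases, and the comparison fails.

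\paragraph{The missing ingredient.}
What you need is the order-reversing dictionary of Lemma~\ref{lem:CP} (Chaput--Perrin): for $x=ut_\beta\in W_\af^0$ and $k\in I$, one has $s_kx>x$ and $s_kx\in W_\af^0$ if and only if $s_ku<u$. It follows from the length formula $\ell(ut_\beta)=\ell(t_\beta)-\ell(u)$ for $ut_\beta\in W_\af^0$; compare $\ell(\miw{i}t_{-\piv_i})=\ell(\pi_i^{-1})=0$. With it the argument closes:
\begin{itemize}
\item If $s_ku<u$, then $s_k*\OGr_{ut_\beta}=e^{\alpha_k}\OGr_{ut_\beta}+(1-e^{\alpha_k})\OGr_{s_kut_\beta}$. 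Dividing by the invariant $\OGr_{t_\beta}$ gives $s_k*\bO^u=e^{\alpha_k}\bO^u+(1-e^{\alpha_k})\bO^{s_ku}$, whose image under $\Phi$ is $s_k^L\CO^u$.
\item If $s_ku>u$, then $s_k*\bO^u=\bO^u$.
\end{itemize}
A rank-one check can tell you which way the conventions go, but it does not prove this correspondence for general $G$.

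Separately, do not describe the star action of $s_k$ as left multiplication by $s_k$ or by a Demazure element. Your reduction to generators requires $s_k*$ to be a ring automorphism of $\Pet$, and left multiplication is not one; it does not even preserve $\Pet$.
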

Both $QK_T(G/B)$ and $ K_*^T(\Gr_G)$
have natural left $\mathbb{K}_\af$-module structures (\cite{Kato}, \cite{Orr}, \cite{ISY}), and the map $\Phi$
is a homomorphism of $\mathbb{K}_\af$-module (\cite[\S 2]{Kato}).
Although the above proposition can be deduced from this fact,
we present a direct computational proof, based on explicit formulas 
of the left action of $W$ in 
\S \ref{ssec:proof_W_commutes}.

%


\subsection{Seidel classes on the affine side}
\label{ssec:Seidel}

For all $i\in I$, there is $\kappa_i\in W_\af^0$ satisfying
\begin{equation}
t_{-\varpi_i^\lor}=\pi_i^{-1}\kappa_i.
\end{equation}
Therefore, by Lemma~\ref{lem:Sigma acts on L}, we have
\begin{equation}
\sigma_i=\OGr_{t_{-\piv_i}}
=\OGr_{\pi_i^{-1}\kappa_i}=
\OGr_{\pi_i^{-1}}
(u_{\pi_i^{-1}}*\OGr_{\kappa_i}).
\end{equation}


\begin{prop}
For any $i\in I^s$, we have
\begin{align}
\miw{i}t_{-\varpi_i^\lor}&=\pi_i^{-1},\label{eq:pi inverse}\\
\pi_i\miw{i}^{-1}\pi_i^{-1}&=\kappa_i\label{eq:min-i and kappa}.
\end{align} 
\end{prop}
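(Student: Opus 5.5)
The two identities are purely group-theoretic statements in $\hat{W}_\af$. We prove \eqref{eq:pi inverse} first and then deduce \eqref{eq:min-i and kappa} from it together with the defining relation $t_{-\varpi_i^\lor}=\pi_i^{-1}\kappa_i$.

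For \eqref{eq:pi inverse}, recall the standard description of $\pi_i$ for a special node $i$ (cf. \cite[\S 10.1]{LS:Acta}): $\pi_i=t_{\varpi_i^\lor}\,w_{P_i}w_\circ$. Rather than quote it, we verify it.

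\begin{enumerate}
\item Put $y:=\miw{i}t_{-\varpi_i^\lor}$. It lies in $\hat{W}_\af$, and its image in $\Sigma\cong\hat{W}_\af/W_\af\cong P^\lor/Q^\lor$ is the class of $-\varpi_i^\lor$.
\item Elements of $\Sigma$ are exactly the length-zero elements of $\hat{W}_\af$. So it suffices to show that $y$ has length zero, i.e.\ that $y$ maps every positive affine root to a positive affine root.
\item Write $y=t_{-\miw{i}\varpi_i^\lor}\miw{i}$. Take a positive affine root $\alpha+n\delta$, meaning $n>0$, or $n=0$ with $\alpha>0$. With the convention $t_\lambda(\alpha+n\delta)=\alpha+(n-\langle\lambda,\alpha\rangle)\delta$, its image is
\begin{equation*}
\miw{i}\alpha+\bigl(n+\langle\varpi_i^\lor,\alpha\rangle\bigr)\delta .
\end{equation*}
\item The verification uses three facts, valid for $\alpha>0$: $\langle\varpi_i^\lor,\alpha\rangle\in\{0,1\}$ by \eqref{eq:cominuscule}; $\miw{i}\alpha<0$ exactly when $\alpha\notin R_{P_i}$, which is exactly when $\langle\varpi_i^\lor,\alpha\rangle=1$ (since $\miw{i}=w_\circ w_{P_i}$ has inversion set $R^+\setminus R^+_{P_i}$); and $\miw{i}$ sends $R^+_{P_i}$ to positive roots.
\item With these, a case check on the sign of $\alpha$ and on whether $\alpha\in R_{P_i}$ shows that the image is always positive. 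When $\alpha\notin R_{P_i}$ and $n=0$, the coefficient of $\delta$ rises to $1$; when $\alpha<0$ and $n\ge 1$, it drops by $1$ and stays $\ge 0$. In each case positivity holds.
\item Hence $y\in\Sigma$. Comparing classes, $y$ is the element of $\Sigma$ corresponding to $-\varpi_i^\lor$, namely $\pi_i^{-1}$. For this identification we either use that $\pi_i\mapsto \varpi_i^\lor$ under $\Sigma\cong P^\lor/Q^\lor$, or check directly that $y^{-1}$ sends $\alpha_0$ to $\alpha_i$, i.e.\ sends $0$ to $i$.
\end{enumerate}

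For \eqref{eq:min-i and kappa}, substitute $t_{-\varpi_i^\lor}=\pi_i^{-1}\kappa_i$ into \eqref{eq:pi inverse}. This gives $\miw{i}\pi_i^{-1}\kappa_i=\pi_i^{-1}$, so $\kappa_i=\pi_i\miw{i}^{-1}\pi_i^{-1}$, as claimed.

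The main obstacle is bookkeeping: keeping the sign convention for translations consistent with the paper, and pinning down the identification of $y$ with $\pi_i^{-1}$ rather than some other element of $\Sigma$. Checking that $y^{-1}\alpha_0=\alpha_i$ settles the latter. With $\alpha_0=-\theta+\delta$, the image $y\alpha_i=\miw{i}\alpha_i+\delta$ should equal $\alpha_0$; this requires $\miw{i}\alpha_i=-\theta$, which holds since $w_{P_i}\alpha_i$ is the highest root having $\alpha_i$ in its support, and that root is $\theta$ for cominuscule $i$.
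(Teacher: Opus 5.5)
Your proof is correct, but it takes a different route from the paper. The paper's proof is by citation. It identifies $\miw{i}$ with the minimal representative $w_\circ^{P_i}$ of $w_\circ W_{P_i}$, then quotes \cite[\S 10.1 (8)]{LS:Acta} for the first identity and \cite[Proposition 2.34]{ISY} for the second.

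You instead verify the first identity from scratch. You show that $y=\miw{i}t_{-\piv_i}$ has length zero, because the $\delta$-shift $\langle\piv_i,\alpha\rangle\in\{0,1\}$ is exactly compensated by the inversion set of $\miw{i}$. This is the same input as the paper's lemma on $\Inv(\miw{i})$. You then identify which element of $\Sigma$ it is via $y(\alpha_i)=-\theta+\delta=\alpha_0$, using $\miw{i}\alpha_i=-\theta$.

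What your route buys:
\begin{itemize}
\item It makes visible exactly where cominusculity enters. For non-special $i$, a positive root $\beta$ with $\langle\piv_i,\beta\rangle=2$ gives $y(-\beta+\delta)=-\miw{i}\beta-\delta$, which is negative, so $y\notin\Sigma$.
\item It fixes the sign convention $t_\lambda(\alpha+n\delta)=\alpha+(n-\langle\lambda,\alpha\rangle)\delta$ explicitly. This is the convention forced by the paper's requirement that antidominant translations lie in $W_\af^0$. With the opposite sign, $y$ would not have length zero, so this is a genuine check rather than a formality.
\item Deriving the second identity formally from the first and the relation $t_{-\piv_i}=\pi_i^{-1}\kappa_i$ is a real simplification. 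Given how $\kappa_i$ is introduced in this paper, the appeal to \cite{ISY} is unnecessary.
\end{itemize}

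One point to tighten in your case check. When $\alpha<0$, $n=1$ and $\langle\piv_i,\alpha\rangle=-1$, the $\delta$-coefficient drops to $0$. Positivity then requires $\miw{i}\alpha>0$, which follows from your second fact applied to $-\alpha$. Say this explicitly rather than only noting that the coefficient ``stays $\ge 0$''.
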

\begin{proof}
 For any $i\in I$ (not necessarily special),
 the stabilizer of $\piv_i$ is the standard parabolic subgroup $W_{P_i}.$ 
It follows that
$$
w_\circ W_{P_i}=\{u\in W\mid u(\piv_i)=w_\circ (\piv_i)\}.
$$
Let $w_\circ^{P_i}$ denote 
the unique smallest element in the coset $w_\circ W_{P_i}$. For $i\in I^s$, $w_\circ^{P_i}$ coincide  with $\miw{i}$.
Now we assume $i\in I^s$. Then 
 \cite[\S 10.1 (8)]{LS:Acta} reads as \eqref{eq:pi inverse}.
 Equation \eqref{eq:min-i and kappa}
 is shown in \cite[Proposition 2.34]{ISY} ($\miw{i}$ is denoted by $u_i^{-1}$ in \cite{ISY}).
\end{proof}



For any special node $i$, we have
\begin{equation}
\bO^{\miw{i}}=\frac{\OGr_{\pi_i^{-1}}}{\sigma_i}
\label{eq:Ovi}
\end{equation}
from \eqref{eq:pi inverse}.

\begin{example}
In type $D_5$
, we have $I^s=\{1,4,5\}$. The group $\Sigma$ is cyclic of order $4$ and generated by $\pi_4.$ 

\begin{center}
\begin{tikzpicture}[
  scale=0.5,
  line width=0.7pt,
  dot/.style={circle,draw,inner sep=0pt,minimum size=5pt},
  lab/.style={font=\small, inner sep=0pt, outer sep=0pt} 
]
  \coordinate (C2) at (0,0);
  \coordinate (C3) at (2.6,0);
  \coordinate (L0) at (-2.3, 1.0);
  \coordinate (L1) at (-2.3,-1.0);
  \coordinate (R4) at ( 4.9, 1.0);
  \coordinate (R5) at ( 4.9,-1.0);

  \node[dot] (a2) at (C2) {};
  \node[dot] (a3) at (C3) {};
  \node[dot] (a0) at (L0) {};
  \node[dot] (a1) at (L1) {};
  \node[dot] (a4) at (R4) {};
  \node[dot] (a5) at (R5) {};

  \draw (a2)--(a3);
  \draw (a2)--(a0);
  \draw (a2)--(a1);
  \draw (a3)--(a4);
  \draw (a3)--(a5);

\node[lab,anchor=east]  at (-2.6,  1.00) {$0$};
\node[lab,anchor=east]  at (-2.6, -1.00) {$1$};
\node[lab,anchor=north] at ( 0.00, -0.3) {$2$};
\node[lab,anchor=north] at ( 2.60, -0.3) {$3$};
\node[lab,anchor=west]  at ( 5.2,  1.00) {$4$};
\node[lab,anchor=west]  at ( 5.2, -1.00) {$5$};
\end{tikzpicture}
%
%
%
%
%
\end{center}
We have $
\pi_4^2=\pi_1,\;
\pi_5=\pi_4^{-1}.
$
For $\pi_4$, the corresponding affine Dynkin automorphism is given by  
$0\mapsto 4,\;
1\mapsto 5,\;
2\mapsto 3,\;
3\mapsto 2,\;
4\mapsto 1,\;
5\mapsto 0.
$
We have
\begin{align*}
\kappa_4&=s_1s_2s_3s_5s_0s_2s_3s_1s_2s_0,\\
\miw{4}&=
s_5s_3s_4s_2s_3s_5s_1s_2s_3s_4.
\end{align*}
As a signed permutation,
$\miw{4}$ is $\bar{5}\bar{4}\bar{3}\bar{2}1$, sending $1$ to $\bar{5}$, $2$ to $\bar{4}$, etc.
We have 
$$
(\miw{4})^2=
\miw{1}=\bar{1}234\bar{5}.
$$
Some of the computations for type \(D_5\) in this paper were carried out
using the SageMath package for extended affine Weyl groups.
\end{example}



\subsection{Some properties of $\miw{i}$}
\label{sec:Seidel}




Let $R^{+}$ (resp. $ R^-$) be the set of all positive (resp. negative) roots.
For $w \in W$, set 
\begin{equation*}
    \inv(w) := \{\alpha \in R^{+} \mid w(\alpha) \in R^{-} \}. 
\end{equation*}
Note that 
\begin{equation}
    \Des(w) = \{ k \in I \mid \alpha_{k} \in \inv(w)\}. 
    \label{eq:Des roots}
\end{equation}
For $u, v \in W$, it is straightforward to see  
\begin{equation}
    \inv(vw) = (\inv(w) \setminus (-w^{-1}\inv(v))) \sqcup ((w^{-1}\inv(v)) \setminus \inv(w)).
    \label{eq:inv wv}
\end{equation}

\begin{lem}For $i \in I^{s}$, we have 
\begin{equation} \label{eq:long_rep_inv}
    \inv(\miw{i}) = \{ \alpha \in R \mid \pair{\varpi_{i}^{\vee}}{\alpha} = 1 \}.  
\end{equation}
\end{lem}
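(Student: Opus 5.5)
We prove the two inclusions between $\inv(\miw{i})$ and $R_1:=\{\alpha\in R^+\mid \pair{\piv_i}{\alpha}=1\}$. The condition $\pair{\piv_i}{\alpha}=1$ forces $\alpha$ to be positive, so $R_1$ is exactly the right-hand side of \eqref{eq:long_rep_inv}. Since $i$ is special, \eqref{eq:cominuscule} splits $R^+$ into $R_1$ and $R^+_{P_i}:=\{\alpha\in R^+\mid\pair{\piv_i}{\alpha}=0\}$, the positive roots of the Levi of $P_i$.

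We use the factorization $w_\circ=\miw{i}w_{P_i}$ from the definition, with $\ell(w_\circ)=\ell(\miw{i})+\ell(w_{P_i})$ because $\miw{i}=w_\circ^{P_i}$ is the minimal coset representative. The inversion-set formula \eqref{eq:inv wv}, applied with $v=\miw{i}$ and $w=w_{P_i}$, becomes the disjoint union
$$\inv(w_\circ)=\inv(w_{P_i})\sqcup w_{P_i}^{-1}\inv(\miw{i}).$$
We also know $\inv(w_\circ)=R^+$ and $\inv(w_{P_i})=R^+_{P_i}$. Hence $w_{P_i}^{-1}\inv(\miw{i})=R^+\setminus R^+_{P_i}=R_1$.

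The element $w_{P_i}$ lies in $W_{P_i}$, which is the stabilizer of $\piv_i$, so it preserves $\pair{\piv_i}{\cdot}$. It also permutes $R^+\setminus R^+_{P_i}$, because $w_{P_i}$ only sends roots of the Levi to negative roots. Therefore $w_{P_i}$ maps $R_1$ to itself, and $\inv(\miw{i})=w_{P_i}R_1=R_1$.

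There is an alternative that avoids length additivity: $\miw{i}$ is minimal in its coset, so it sends the simple roots $\alpha_j$ with $j\neq i$ to positive roots, and hence sends all of $R^+_{P_i}$ to $R^+$. Thus $\inv(\miw{i})\subseteq R_1$. Conversely, for $\alpha\in R_1$ we have $\pair{\miw{i}(\piv_i)}{\miw{i}\alpha}=1$ with $\miw{i}(\piv_i)=w_\circ(\piv_i)$ antidominant, which forces $\miw{i}\alpha$ to be negative.

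The only delicate point is the length additivity $\ell(w_\circ)=\ell(\miw{i})+\ell(w_{P_i})$, which is needed to make the union in \eqref{eq:inv wv} clean. This is standard for minimal coset representatives, and the second argument bypasses it entirely, so we expect no real obstacle.
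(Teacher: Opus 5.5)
Your proposal is correct and follows the paper's route. The paper also uses that $\miw{i}$ is the minimal representative of $w_\circ W_{P_i}$, cites Macdonald (2.4.4) for $\inv(\miw{i})=\{\alpha\in R^+\mid \pair{\piv_i}{\alpha}>0\}$, and then applies \eqref{eq:cominuscule}; your two arguments just prove that cited fact directly (your second argument does so without even using the cominuscule hypothesis) instead of quoting it.
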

\begin{proof}
For any $i\in I$, we have
    \begin{equation}
\inv(\miw{i}) = \{ \alpha \in R^{+} \mid \pair{\varpi_{i}^{\vee}}{\alpha} > 0\}.
\label{eq:Inv min}
    \end{equation}
    Indeed, 
    $\miw{i}$ is the shortest element in $
w_{\circ}W_{P_i}$, therefore \eqref{eq:Inv min} holds by  \cite[(2.4.4)]{Mac}.
Then \eqref{eq:long_rep_inv} holds by \eqref{eq:cominuscule}. 
\end{proof}
\begin{remark}
For $i\in I^s$,
the set \eqref{eq:long_rep_inv} is known to equal to $R_{P_i}^{+},$ the positive part of the root system of $P_i.$
\end{remark}
\section{Proof of the Seidel product theorem }\label{sec:proof}

The following
is a key lemma for the proof of Theorem \ref{thm:main}. 
\begin{lem}\label{lem:key key} For $w\in W$ and $i\in I^s$, we have
\begin{equation}
\gamma_{\miw{i} w}=\gamma_w-w^{-1}(\piv_i).\label{eq:key}
\end{equation}
\end{lem}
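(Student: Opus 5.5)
The plan is to compare the right descent sets of $w$ and $\miw{i} w$ simple root by simple root, using the description \eqref{eq:long_rep_inv} of $\inv(\miw{i})$. Since $\gamma_w=-\sum_{k\in\Des(w)}\piv_k$, and any $\lambda\in P^\lor$ satisfies $\lambda=\sum_{k\in I}\pair{\lambda}{\alpha_k}\piv_k$, identity \eqref{eq:key} is equivalent to the pointwise statement
\begin{equation*}
\mathbf{1}_{\Des(\miw{i} w)}(k)=\mathbf{1}_{\Des(w)}(k)+\pair{w^{-1}(\piv_i)}{\alpha_k}\qquad (k\in I),
\end{equation*}
where $\mathbf{1}_S$ denotes the indicator function of $S\subset I$. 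I will prove this pointwise identity.

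Fix $k\in I$ and put $\beta=w(\alpha_k)$. Then $\pair{w^{-1}(\piv_i)}{\alpha_k}=\pair{\piv_i}{\beta}$. Because $i$ is special, \eqref{eq:cominuscule} applied to $\pm\beta$ gives $\pair{\piv_i}{\beta}\in\{-1,0,1\}$, with sign matching that of $\beta$ when nonzero. By \eqref{eq:Des roots}, we have $k\in\Des(w)$ iff $\beta\in R^-$, and $k\in\Des(\miw{i}w)$ iff $\miw{i}(\beta)\in R^-$. I will read off the sign of $\miw{i}(\beta)$ from \eqref{eq:long_rep_inv}, using that $\inv(\miw{i})$ consists of the positive roots $\alpha$ with $\pair{\piv_i}{\alpha}=1$. (One could instead invoke \eqref{eq:inv wv}, but the direct argument is cleaner.)

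There are two cases. If $\beta\in R^+$, then $\miw{i}(\beta)\in R^-$ iff $\pair{\piv_i}{\beta}=1$. So either $k$ enters the descent set and the pairing equals $1$, or nothing changes and the pairing equals $0$. If $\beta\in R^-$, then $-\beta\in R^+$, and $\miw{i}(\beta)\in R^-$ iff $\miw{i}(-\beta)\in R^+$, iff $\pair{\piv_i}{-\beta}\neq 1$, iff $\pair{\piv_i}{\beta}=0$. So either $k$ stays a descent and the pairing equals $0$, or $k$ leaves the descent set and the pairing equals $-1$. In all cases the pointwise identity holds.

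Summing the identity against $-\piv_k$ over $k\in I$ gives $\gamma_{\miw{i}w}=\gamma_w-\sum_k\pair{w^{-1}(\piv_i)}{\alpha_k}\piv_k=\gamma_w-w^{-1}(\piv_i)$. The only delicate point is the negative-root case, where the cominuscule condition is essential: it forces $\pair{\piv_i}{\beta}\in\{0,-1\}$, so the descent indicator changes by exactly the pairing. Without it, the pairing could be $\le -2$ and the identity would fail.
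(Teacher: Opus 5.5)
Your proof is correct and is essentially the paper's argument. The paper also compares $\Des(\miw{i}w)$ with $\Des(w)$ using \eqref{eq:long_rep_inv}. Going through \eqref{eq:inv wv}, it writes $\Des(\miw{i}w)=I_1\sqcup I_2$, where $I_1$ consists of the descents of $w$ with $\pair{\piv_i}{w(\alpha_j)}\neq -1$ and $I_2$ of the $j$ with $\pair{\piv_i}{w(\alpha_j)}=1$, and then sums using \eqref{eq:cominuscule}. Your pointwise indicator identity, obtained by checking the sign of $\miw{i}(w(\alpha_k))$ directly, is the same computation organized root by root.
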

\begin{proof}
By applying \eqref{eq:inv wv} with $w=\miw{i}$, we have
 \begin{align*}
 \mathrm{Inv}(\miw{i}w)
 &=\left(\Inv(w)\setminus
 (-w^{-1}\Inv(\miw{i}))\right)
 \sqcup
(w^{-1}\Inv(\miw{i})\setminus\Inv(w) ).
 \end{align*}
 By  \eqref{eq:long_rep_inv}
 we have
 $$
\pm w^{-1}\Inv(\miw{i})=\{
\alpha\in R\mid \pair{\piv_i}{w(\alpha)}=\pm 1\}.
$$
Therefore we have
\begin{align}
\Des(\miw{i}w)
&=I_1\sqcup I_2,\nonumber\\
I_1&=\{
j\in I\mid \alpha_j\in \Inv(w) \;\text{and}\;
\pair{\piv_i}{w(\alpha_j)}\ne -1\}\nonumber\\
&=\Des(w)
\setminus\{j\in I\mid 
\pair{\piv_i}{w(\alpha_j)}= -1\}\nonumber\\
&=\Des(w)\setminus\{j\in \Des(w)\mid 
\pair{\piv_i}{w(\alpha_j)}= -1\},\label{eq:Des minus}
\\
I_2&=
\{j\in I\mid \pair{\piv_i}{w(\alpha_j)}=1\;\text{and} \;\alpha_j\notin \Inv(w)\}\nonumber\\
&
\label{eq:I2 last}=\{j\in I\mid \pair{\piv_i}{w(\alpha_j)}=1\},
\end{align}
where \eqref{eq:Des minus}
 and \eqref{eq:I2 last} holds since
$\pair{\piv_i}{w(\alpha_j)}=1$ (resp. $-1$) implies that $w(\alpha_j)\in R^{+}$ (resp. $R^{-}$).
Therefore, we obtain 
\begin{align*}
\gamma_{\miw{i}w} 
    &=
    -\left(\sum_{j\in I_1}
    \piv_j+
    \sum_{j\in I_2}
    \piv_j\right)
\\&=    - \left( \left( \sum_{j \in \Des(w)} \piv_j - \sum_{\substack{j \in \Des(w) \\ \pair{\piv_{i}}{w(\alpha_{j})}=-1}} \piv_j \right) + \sum_{\substack{j \in I \\ \pair{\piv_i}{w(\alpha_{j})}=1}} \piv_j \right) \\&=    - \left( \left( \sum_{j \in \Des(w)} \piv_j - \sum_{\substack{j \in I \\ \pair{\piv_{i}}{w(\alpha_{j})}=-1}} \piv_j \right) + \sum_{\substack{j \in I \\ \pair{\piv_i}{w(\alpha_{j})}=1}} \piv_j \right)\\ 
\\&=    - \left( \left( \sum_{j \in \Des(w)} \varpi_{j}^{\vee} - \sum_{\substack{j \in I \\ \pair{w^{-1}(\piv_{i})}{\alpha_{j}}=-1}} \varpi_{j}^{\vee} \right) + \sum_{\substack{j \in I \\ \pair{w^{-1}(\piv_i)}{\alpha_{j}}=1}} \piv_j \right)\\
&=
\gamma_w
-\sum_{j\in I}
\pair{w^{-1}(\piv_i)}{\alpha_j}\piv_j\quad \text{(by \eqref{eq:cominuscule})} 
\\
    &= \gamma_{w} - w^{-1}(\piv_i), 
\end{align*}
where in the third equality we have used the fact that  
$\pair{\piv_i}{w(\alpha_j)}=-1$ implies $j\in\Des(w)$
again. \end{proof}

\begin{example}\label{ex:D5-2} In type $D_5$, 
we consider $w=s_{2}s_4s_3s_5s_3s_1s_2.$ Then,  $\Des(w)=\{2,5\}$ and $\Des(\miw{4}w)=\{1,3\}$. We have 
$w^{-1}(\piv_4)=
\piv_1-\piv_2+\piv_3-\piv_5.$
\end{example}

\begin{lem}
\label{lem:key}
Let $i$ be any special node, and $w\in W.$ We have
\begin{equation*}
\pi_i^{-1}wt_{\gamma_w}=
\miw{i} wt_{\gamma_{\miw{i} w}}.
\end{equation*}
\end{lem}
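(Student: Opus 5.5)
The identity follows from \eqref{eq:pi inverse} combined with Lemma~\ref{lem:key key}, together with the multiplication rules in $\hat{W}_\af=W\ltimes t_{P^\lor}$. First I rewrite the left-hand side using \eqref{eq:pi inverse}, $\pi_i^{-1}=\miw{i}t_{-\piv_i}$. This gives
\begin{equation*}
\pi_i^{-1}wt_{\gamma_w}=\miw{i}\,t_{-\piv_i}\,w\,t_{\gamma_w}.
\end{equation*}

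Next I commute the translation past $w$. This uses the standard relation $t_\lambda w=w\,t_{w^{-1}(\lambda)}$ in the extended affine Weyl group, valid for $\lambda\in P^\lor$ and $w\in W$; it holds because $w t_\mu w^{-1}=t_{w(\mu)}$. Applied here it gives $t_{-\piv_i}w=w\,t_{-w^{-1}(\piv_i)}$. Since translations commute, the left-hand side becomes
\begin{equation*}
\miw{i}\,w\,t_{\gamma_w-w^{-1}(\piv_i)}.
\end{equation*}

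Finally, Lemma~\ref{lem:key key} states $\gamma_{\miw{i}w}=\gamma_w-w^{-1}(\piv_i)$, so the last expression is exactly $\miw{i}wt_{\gamma_{\miw{i}w}}$, as claimed.

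The only real point of care is the convention for how $W$ acts on translations inside $\hat{W}_\af$, namely whether $w t_\mu w^{-1}=t_{w(\mu)}$. This is the standard convention and matches the one used in \eqref{eq:pi inverse} and in \cite{LS:Acta}. With it, the sign and the $w^{-1}$ come out exactly as in \eqref{eq:key}, which confirms that the conventions are consistent. All the substantive combinatorics is already contained in Lemma~\ref{lem:key key}.
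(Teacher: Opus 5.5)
Your proposal is correct and follows the paper's proof essentially verbatim: substitute $\pi_i^{-1}=\miw{i}t_{-\piv_i}$ from \eqref{eq:pi inverse}, commute the translation past $w$ via $t_{-\piv_i}w=w\,t_{-w^{-1}(\piv_i)}$, and conclude with Lemma~\ref{lem:key key}. Your remark on the convention $w t_\mu w^{-1}=t_{w(\mu)}$ is the right point to flag, and you handle it correctly.
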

\begin{proof}From \eqref{eq:pi inverse}, we have
\begin{align*}
\pi_i^{-1}w
t_{\gamma_w}=
\miw{i} t_{-\piv_i}w
t_{\gamma_w}
=\miw{i}
wt_{-w^{-1}(\piv_i)}t_{\gamma_w}.
\end{align*}
Thus the lemma follows from Lemma \ref{lem:key key}.
\end{proof}

Let  
$x\in \Wexgr.$ Then 
 from \eqref{eq:pi on ell} and \eqref{eq:pi inverse} we have
\begin{equation}
\label{eq:miw i ell}
\miw{i}*\OGr_x=\OGr_{\pi_i^{-1}}^{-1}\OGr_{\pi_i^{-1}x}.
\end{equation}
 
\begin{proof}[Proof of Theorem \ref{thm:main}] We compute:
\begin{align*}
\bO^{\miw{i}}(
\miw{i}*\bO^{w})&=
\frac{\OGr_{\pi_i^{-1}}}{\OGr_{t_{-\piv_i}}}
\miw{i}*\left(
\frac{\OGr_{wt_{\gamma_w}}}{\OGr_{t_{\gamma_w}}}
\right)\quad \text{(by \eqref{eq:Ovi})}\\
&=\frac{\OGr_{\pi_i^{-1}}}{\OGr_{t_{-\piv_i}}}
\frac{(\OGr_{\pi_i^{-1}})^{-1}\OGr_{\pi_i^{-1}wt_{\gamma_w}}}{\OGr_{t_{\gamma_w}}}\quad\text{(by \eqref{eq:miw i ell} and Lemma \ref{lem:W invariance of sigma})}\\
&=\frac{1}{\OGr_{t_{-\piv_i}}}
\frac{\OGr_{\miw{i}wt_{\gamma_{\miw{i}w}}}}{\OGr_{t_{\gamma_w}}}\quad\text{(by Lemma \ref{lem:key})}\\
&=
\frac{\OGr_{t_{\gamma_{\miw{i}w}}}}{\OGr_{t_{-\piv_i}}\OGr_{t_{\gamma_w}}}
\bO^{\miw{i}w}\\
&=
\frac{\OGr_{t_{\gamma_w-w^{-1}(\piv_i)}}}{\OGr_{t_{-\piv_i}}\OGr_{t_{\gamma_w}}}
\bO^{\miw{i}w}\quad\text{(by Lemma \ref{lem:key key})}.
\end{align*}
Note that $\piv_i-w^{-1}(\piv_i)$ belongs to the nonnegative integer span of the simple coroots.
Then, by applying $K$-Peterson map, 
we obtain the theorem.
\end{proof}
\subsection{Proof of Corollary \ref{cor:P}}

Let $P$ be an arbitrary standard parabolic subgroup of $G$, and let
$\pi \colon G/B \to G/P$
be the natural projection. We recall a
pushforward result, due to Kato (\cite{Kato2}).

Denote by $W_P \subset W$ the Weyl group of $P$.
Recall that, for $w\in W$,  $\minrep{w}$ denote the unique minimal-length
representative of the coset $wW_P$. 
Let $Q^\lor_{\ge 0}:=\sum_{j\in I}\Z_{\ge 0}\alpha_j^\lor$.
\begin{prop}[Kato \cite{Kato2}, Theorem~2.19]\label{prop:push}
There exists a surjective homomorphism of $R(T)$-algebras
\begin{equation*}
\pi_* \colon
QK_T(G/B)
\longrightarrow
QK_T(G/P)
\end{equation*}
sending $\CO^w$ to $\COP^{\minrep{w}}$ for $w \in W$, and $Q^\beta$ to
$Q^{\minrep{\beta}}$ for 
$\beta\in Q^\lor_{\ge 0}.$
\end{prop}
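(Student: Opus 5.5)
This is Kato's result \cite[Theorem~2.19]{Kato2}; we only indicate how we would reconstruct it, and we take Kato's semi-infinite description of quantum $K$-theory as given. The plan is to realize both rings inside equivariant $K$-theory of semi-infinite flag manifolds. We would then define $\pi_*$ geometrically as the pushforward along the natural projection of semi-infinite flag manifolds, and finally read off its effect on Schubert classes and on Novikov variables.

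\emph{Step 1 (models).} By Kato, $QK_T(G/B)$ is identified, as an $R(T)$-module, with a suitable completion of $K_{T\times\mathbb{C}^*}(\mathbf{Q}_G)$, where $\mathbf{Q}_G$ is the semi-infinite flag manifold. The identification sends $Q^\beta\CO^w$ to the class of the semi-infinite Schubert variety $\mathbf{Q}_G(wt_\beta)$. Quantum multiplication by line bundles corresponds to tensoring with $\mathcal{O}(-\varpi_j)$, through the quantum $K$-Chevalley formula. The same holds for $G/P$, using the parabolic semi-infinite flag manifold $\mathbf{Q}_{G,P}$. Its Schubert varieties are indexed by $W^P\times(Q^\lor/Q_P^\lor)$, and we would identify $Q^\lor/Q_P^\lor$ with the image of $\beta\mapsto\minrep{\beta}$.

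\emph{Step 2 (pushforward of Schubert classes).} Let $\varpi\colon\mathbf{Q}_G\to\mathbf{Q}_{G,P}$ be the natural projection. It maps $\mathbf{Q}_G(wt_\beta)$ onto $\mathbf{Q}_{G,P}(\minrep{w}t_{\minrep{\beta}})$, and its fibres are (semi-infinite versions of) the flag variety of the Levi factor of $P$. Since both Schubert varieties have rational singularities (they are normal, with rational singularities, in Kato's framework), the higher direct images vanish and $\varpi_*\mathcal{O}=\mathcal{O}$. This gives $\CO^w\mapsto\COP^{\minrep w}$ and $Q^\beta\mapsto Q^{\minrep\beta}$ for $\beta\in Q^\lor_{\ge0}$. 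Surjectivity is then immediate, because every Schubert class $\COP^{u}$ with $u\in W^P$ is hit by $\CO^{u}$.

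\emph{Step 3 (multiplicativity), the main obstacle.} Additivity and $R(T)$-linearity are formal; the hard part is showing that $\pi_*$ is a ring homomorphism. The approach we would try first relies on the fact that $QK_T(G/B)$ is generated, after localization, by the line bundle classes $\mathcal{O}(-\varpi_j)$ together with the Novikov variables. It therefore suffices to prove a projection formula of the form
\[\pi_*\bigl(\mathcal{O}(-\varpi_j)\star x\bigr)=\mathcal{O}(-\varpi_j)\star\pi_*x \quad\text{for } j\notin I_P,\]
and to handle $j\in I_P$ separately. For $j\notin I_P$, the line bundle is pulled back along $\varpi$, so the projection formula on semi-infinite flag manifolds gives the identity directly. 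For $j\in I_P$, we would check on the Chevalley formula that the contributions collapse after pushforward. Concretely, the quantum $K$-Chevalley formula of Lenart--Naito--Sagaki expresses the product via quantum Lakshmibai--Seshadri paths, and we would compare the path models for $G/B$ and $G/P$ term by term. This bookkeeping, which amounts to showing that the parabolic quantum Bruhat graph is the image of the full one, is the step we expect to be the most delicate.
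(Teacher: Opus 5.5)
The paper gives no proof of this statement. It quotes Kato's Theorem~2.19 and uses it as a black box, so your opening sentence already does what the paper does. Your reconstruction, however, is not a proof. It has a genuine gap at multiplicativity, and the gap is sharper than ``bookkeeping''.

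Take $j\in I_P$, with the convention that $\mathcal{O}(\varpi_j)$ is globally generated.
\begin{itemize}
\item One has $\minrep{s_j}=e$ and $[\mathcal{O}_{G/B}(-\varpi_j)]=e^{\chi}(\CO^{e}-\CO^{s_j})$ for some character $\chi$. Hence $\pi_*[\mathcal{O}_{G/B}(-\varpi_j)]=0$, and what you must show is $\pi_*(\mathcal{O}(-\varpi_j)\star x)=0$ for all $x$.
\item This line bundle does not descend to $G/P$, so there is no $G/P$ Chevalley formula to compare with term by term.
\item The vanishing is false classically. For $G=SL_2$ and $P=G$ one has $\mathcal{O}(-1)\star\mathcal{O}_{\mathrm{pt}}=\mathcal{O}_{\mathrm{pt}}-Q$ in $QK(\mathbb{P}^1)$. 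Only the quantum term, under $Q\mapsto Q^{\minrep{\alpha^\lor}}=1$, cancels the classical one.
\end{itemize}
So the missing ingredient is a signed cancellation between classical and quantum terms after setting $Q_k=1$ for $k\in I_P$. Knowing that the parabolic quantum Bruhat graph is the image of the full one does not give this, and nothing else in your sketch does. On the semi-infinite side, the natural substitute would be a relative vanishing of $R\varpi_*(\mathcal{O}(-\varpi_j)\otimes\mathcal{O}_{\mathbf{Q}_G(x)})$ along the Levi-type fibres, which would also need proof. Relatedly, your generation argument must not invert $\mathcal{O}(-\varpi_j)$ for $j\in I_P$, since these classes are sent to $0$.

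Two further steps carry real weight.
\begin{itemize}
\item Step~1 assumes a parabolic version of Kato's theorem. You need an identification of $QK_T(G/P)$ with the $K$-group of $\mathbf{Q}_{G,P}$ that intertwines $\star\,\mathcal{O}_{G/P}(-\varpi_j)$, $j\notin I_P$, with tensor product by line bundles. This is not part of the $G/B$ theory you say you take as given, and it is an input of roughly the same depth as the proposition itself.
\item In Step~2, ``rational singularities $\Rightarrow R\varpi_*\mathcal{O}=\mathcal{O}$'' is an argument for varieties of finite type. Semi-infinite Schubert varieties are of infinite type. The identity $R\varpi_*\mathcal{O}_{\mathbf{Q}_G(wt_\beta)}=\mathcal{O}_{\mathbf{Q}_{G,P}(\minrep{w}t_{\minrep{\beta}})}$ needs the semi-infinite normality and cohomology-vanishing results (obtained via Frobenius splitting), together with an analysis of the fibres of $\varpi$.
\end{itemize}
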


We use the following standard lemma (see, e.g., \cite[Chapter~2]{BB}).
\begin{lem}\label{lem:std}
Let $w \in W^P$ and $i \in I$. If $s_i w > w$ and
$s_i w \notin W^P$, then there exists $j \in I_P$
such that $s_i w = w s_j$.
\end{lem}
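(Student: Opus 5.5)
Since $w\in W^P$, the element $w$ is the minimal-length representative of $wW_P$, so every $s_j$ with $j\in I_P$ satisfies $ws_j>w$. I will prove the lemma by combining this with the exchange (lifting) property of Coxeter groups, using the hypothesis that $s_iw$ is not minimal in its coset.

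Since $s_iw\notin W^P$, there is some $j\in I_P$ with $\ell(s_iws_j)<\ell(s_iw)$. We also have $\ell(s_iw)=\ell(w)+1$, so this reads $\ell(s_iws_j)=\ell(w)$. On the other hand, $\ell(ws_j)=\ell(w)+1$ because $w\in W^P$ and $j\in I_P$.

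Now compare lengths. Take a reduced word for $s_iw$ beginning with $s_i$, that is, $s_i$ followed by a reduced word for $w$. Because $s_j$ is a right descent of $s_iw$, the exchange condition says that $s_iws_j$ is obtained from this word by deleting one letter. There are two possibilities for the deleted letter.

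If the deleted letter lies inside the reduced word for $w$, then $s_iws_j=s_iw'$, where $w'$ is obtained from $w$ by deleting one letter. This gives $w'=ws_j$. But $w'$ has length at most $\ell(w)-1$, while $\ell(ws_j)=\ell(w)+1$, which is a contradiction.

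So the deleted letter must be the initial $s_i$. Then $s_iws_j=w$, and hence $s_iw=ws_j$, which is the claimed identity.

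The only point needing care is the case analysis in the exchange condition, namely ruling out deletion of a letter inside $w$. The argument above does this with the length comparison, which relies on minimality of $w$ in its coset and nothing else. Alternatively, one can cite the corresponding statement in \cite[Chapter~2]{BB}, for instance Deodhar's lemma on parabolic quotients.
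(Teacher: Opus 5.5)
Your argument is correct. The right exchange condition, applied to the reduced word $s_i\cdot(\text{reduced word for } w)$ of $s_iw$, leaves only the two cases you list. The length count $\ell(ws_j)=\ell(w)+1$ versus $\ell(w')\le \ell(w)-1$ rules out deleting a letter inside $w$, so the deleted letter must be $s_i$, giving $s_iw=ws_j$.

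The paper gives no proof and only cites \cite[Chapter~2]{BB}. What you have written is the standard exchange-condition proof of this case of Deodhar's lemma, so it matches the cited source and makes the lemma self-contained.
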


The following result should be known to experts; we include a proof for completeness.

\begin{prop}\label{prop:pi W}
The pushforward 
$\pi_*: QK_T(G/B)\rightarrow QK_T(G/P)$ commutes with the left $W$-action. 
\end{prop}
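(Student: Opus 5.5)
Since $\pi_*$ and each $u^L$ are $R(T)$-linear up to the twist of $R(T)$ by $u$, and $W$ is generated by the simple reflections, it suffices to check $\pi_*\circ s_i^L = s_i^L\circ\pi_*$ for each $i\in I$ on the Schubert basis $\CO^w$, $w\in W$. Here we also use that $\pi_*$ is compatible with scalars: $s_i^L$ acts on $R(T)$ by $s_i$ on both sides, and fixes the Novikov variables $Q^\beta$. Compatibility with $Q^\beta$ is then automatic from Proposition \ref{prop:push}.

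The first step is to write down the explicit formula \eqref{eq:W acts CO} for $s_i^L$ on $\CO^w$. In the standard Demazure-type form it reads:
\begin{itemize}
\item if $s_iw>w$, then $s_i^L\CO^w=\CO^w$ up to a correction term of the shape $(1-e^{\pm\alpha_i})(\CO^w-\CO^{s_iw})$, with the precise sign dictated by the paper's conventions;
\item if $s_iw<w$, then $s_i^L\CO^w = e^{\ast}\CO^{w}+\dots$, symmetric to the first case.
\end{itemize}
In all cases $s_i^L\CO^w$ is an $R(T)$-combination of $\CO^w$ and $\CO^{s_iw}$ only, and the coefficients depend only on whether $s_iw\gtrless w$. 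The same formula, with the same coefficients, holds in $QK_T(G/P)$ for $\COP^v$, $v\in W^P$, provided $s_iv\in W^P$. When $s_iv\notin W^P$, the formula degenerates to $s_i^L\COP^v=\COP^v$.

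Next, apply $\pi_*$, which sends $\CO^w\mapsto\COP^{\minrep{w}}$, and compare the two sides; we may assume $s_iw>w$, the other case being symmetric. Set $v=\minrep{w}$, and note that $\minrep{s_iw}=\minrep{s_iv}$ since $s_iwW_P=s_ivW_P$. There are three cases.
\begin{itemize}
\item If $s_iv\in W^P$ and $s_iv>v$, then $s_iw>w$ matches $s_iv>v$, and the coefficients agree term by term.
\item If $s_iv\in W^P$ and $s_iv<v$, the relative order flips, which would give a mismatch. To rule this out, use that $s_iv<v$ with both in $W^P$ forces $s_iw<w$; this follows from the length additivity $\ell(w)=\ell(v)+\ell(v^{-1}w)$ together with the exchange property. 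So this case does not occur under our assumption.
\item If $s_iv\notin W^P$, then by Lemma \ref{lem:std} (applied after arranging $s_iv>v$) we have $s_iv=vs_j$ with $j\in I_P$. Hence $\minrep{s_iw}=v=\minrep{w}$, so $\pi_*(\CO^w-\CO^{s_iw})=0$. The correction term dies, giving $\pi_*s_i^L\CO^w=\COP^v=s_i^L\COP^v$. If instead $s_iv<v$ with $s_iv\notin W^P$, the same lemma applied to $s_iv$ gives the same conclusion.
\end{itemize}

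The main obstacle is the bookkeeping of the exact left-action formula \eqref{eq:W acts CO}: one must confirm that its coefficients in the $G/P$ case are literally the $G/B$ coefficients, and that the degenerate case $s_iv\in vW_P$ gives the identity. I would first try to verify this directly from the definition in \cite[\S 8.3]{MNS}. Failing that, I would deduce it from the fact that $\pi_*$ is induced geometrically (via Kato) and hence is $T$-equivariant for the $G$-action, where the left $W$-action comes from $N_G(T)$ acting on both spaces.
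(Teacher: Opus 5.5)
Your plan is the paper's. You reduce, via $s_i$-semilinearity over $R(T)$ and $\mathbb{Z}[Q]$-linearity, to $\pi_*(s_i^L\CO^w)=s_i^L\COP^{\minrep{w}}$. You then compare explicit formulas using Lemma~\ref{lem:std} and length comparisons between $w$ and $v=\minrep{w}$. The paper instead splits on whether $\minrep{s_iw}=\minrep{w}$ and uses Bruhat monotonicity of $w\mapsto\minrep{w}$.

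The formula you anticipate, however, is not the one that holds, and this hides where the work is. The formula is no obstacle: it is \eqref{eq:W acts CO}, resp.\ \eqref{eq:def star on CO}, quoted from \cite[Proposition 8.3]{MNS}. It says $s_i^L\CO^w=\CO^w$ exactly when $s_iw>w$, with no correction term. The nontrivial expression $e^{\alpha_i}\CO^w+(1-e^{\alpha_i})\CO^{s_iw}$ occurs only when $s_iw<w$. Likewise $s_i^L\COP^v=\COP^v$ whenever $s_iv>v$, $v\in W^P$. So the case you treat in detail is the trivial one: its only content is your exclusion (b). The case you dismiss as ``symmetric'' is the one that carries the proof.

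That case is not a mirror image of yours. For $s_iw<w$ the danger is $s_iv\in W^P$ with $s_iv>v$: then $\pi_*s_i^L\CO^w=e^{\alpha_i}\COP^v+(1-e^{\alpha_i})\COP^{s_iv}$, whereas $s_i^L\COP^v=\COP^v$. The mirror of your (b), ``$s_iv>v$ forces $s_iw>w$'', is false: if $s_iv=vs_j$ with $j\in I_P$ and $w=vs_j$, then $s_iw=v<w$. What excludes the bad case is $\ell(s_iw)=\ell(s_iv)+\ell(u)$ for $w=vu$, $u\in W_P$, and this needs $s_iv\in W^P$. The remaining configuration $s_iw<w$, $s_iv>v$, $s_iv\notin W^P$ genuinely occurs. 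It is exactly there that Lemma~\ref{lem:std} is essential: it gives $\minrep{s_iw}=v$, so $\pi_*s_i^L\CO^w=\COP^v=s_i^L\COP^v$. With these two points added, the argument closes.

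Two minor points. The last subcase of your (c) is vacuous, since $s_iv<v$ and $v\in W^P$ force $s_iv\in W^P$; moreover Lemma~\ref{lem:std} does not apply to elements outside $W^P$. Your geometric fallback can be made into a case-free proof, but the relevant point is not that Kato's map is geometric. As an $R(T)$-linear map, $\pi_*$ is the classical pushforward $K_T(G/B)\to K_T(G/P)$, which sends $\CO^w\mapsto\COP^{\minrep{w}}$, extended by $Q^\beta\mapsto Q^{\minrep{\beta}}$. As in \cite{MNS}, $s_i^L$ is the $\mathbb{Z}[Q]$-linear extension of the classical left action by left translation by $N_G(T)$. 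So commutation reduces to $\pi$ being $G$-equivariant.
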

\begin{proof}
According to \cite[Proposition 8.3 (b)]{MNS}, 
the left action of $W$ on $QK_T(G/P)$ for general parabolic subgroup $P$ is given for $w\in W^P$ and $i\in I$, by 
\begin{equation}
s_i^L \COP^{w}=\begin{cases}
e^{\alpha_i}\COP^{w}+(1-e^{\alpha_i})\COP^{{s_iw}} & \text{if $s_i w<w,$}\\
\COP^{w}& \text{otherwise.}
\end{cases}
\label{eq:def star on CO}
\end{equation}
 Note that this holds for $P=B$ too.

Since the left action of $W$ is a $\mathbb{Z}[Q]$-linear ring automorphism of $QK_T(G/B)$ (\cite[\S 8.3]{MNS}), it suffices to prove 
\begin{equation}
\pi_*(s_i^L\CO^w)=s_i^L\COP^{\minrep{w}}
\quad\text{for all $w\in W$ and $i\in I.$}
\label{eq:pi_commutes_with_W}
\end{equation}
We note that the following holds for $w\in W$ and $i\in I$:
\begin{equation}
s_i\minrep{w}\in W^P
\Longleftrightarrow
s_i\minrep{w}
=\minrep{s_iw}.
\label{eq:minrep}
\end{equation}

{\bf Case 1.} First we consider the case when  $\minrep{s_i w}
=\minrep{w}$ holds. Then 
it is easy from the definition of $\pi_*$ and \eqref{eq:def star on CO} to see that \begin{equation}
    \pi_*(s_i^L\CO^w)=\COP^{\minrep{w}}.\label{eq:pi_O} 
 \end{equation}
 Thus, in order to show \eqref{eq:pi_commutes_with_W}, it suffices to show that 
\begin{equation}
s_i^L\COP^{\minrep{w}}=\COP^{\minrep{w}}.
\label{eq:si inv}\end{equation}
If $s_i\minrep{w}>\minrep{w},$
then \eqref{eq:si inv} holds from \eqref{eq:def star on CO}.
Now suppose $s_i\minrep{w}<\minrep{w}.$
 Then we have $s_i\minrep{w}\in W^P$ (well known), and so $s_i\minrep{w}
=\minrep{s_iw}$ by \eqref{eq:minrep}. Then it is easy to see 
\eqref{eq:si inv} holds.


{\bf Case 2.} Next, we consider the case when 
$\minrep{s_iw}\ne \minrep{w}.$
We claim  
\begin{equation}
\label{eq:si minrep}
s_i\minrep{w}\in W^P.
\end{equation}
Indeed, if 
$s_i\minrep{w}< \minrep{w}$, then 
\eqref{eq:si minrep} holds.
Now assume $s_i\minrep{w}> \minrep{w}$ and $s_i\minrep{w}\notin W^P$. Then we have 
$s_i\minrep{w}W_P=\minrep{w}W_P$ by Lemma \ref{lem:std}, which means $\minrep{s_iw}=\minrep{w}.$ This contradicts the assumption, so \eqref{eq:si minrep} holds.

Now assume $s_iw<w.$
Then it ais straightforward to
see \eqref{eq:pi_commutes_with_W} holds.

Finally, we consider the case $s_iw>w$.
We have $\minrep{s_iw}\ge \minrep{w}$.
We know that $s_i\minrep{w}=\minrep{s_iw}$ holds since  \eqref{eq:si minrep} holds. Therefore, we have $s_i\minrep{w}>\minrep{w}.$
Then we have
$s_i^L\CO^w=\CO^w$ and $s_i^L\COP^{\minrep{w}}=\COP^{\minrep{w}}$, and 
\eqref{eq:pi_commutes_with_W} is fulfilled.
\end{proof}

\begin{proof}[Proof of Corollary \ref{cor:P}]
The corollary is 
obtained from Theorem \ref{thm:main} by Proposition \ref{prop:push} and Proposition \ref{prop:pi W}.
\end{proof}
 \appendix
\section{Extended $K$-Peterson algebra}\label{sec:ePetA}
The purpose of this section is to give a brief explanation of the construction in \cite[\S 2]{ISY}. 
\subsection{Extended level-zero $K$-nil-Hecke algebra $\hat{\mathbb{K}}_\af$}
\label{sec:K_nil}
Consider an action of $\hat{W}_\af$ on $R(T^\ad)=\Z[Q]$ given 
by 
\begin{equation*}
(t_\gamma w)e^\beta=e^{w(\beta)}\quad (\gamma\in P^\lor,\;w\in W,\; \beta\in Q),
\end{equation*}
where $w(\beta)$ denotes the natural action of $w$ on $\beta$.
This is called the \emph{level zero action}.
In particular $s_0$ acts on $\Z[Q]$ by $s_{\theta}$.
The action extends to the field $\F:=\mathrm{Frac}\,R(T^\ad)$ of fractions of $R(T^\ad)$.
The twisted group ring 
$\F[\hat{W}_\af]$ is
$\F\otimes_\Z \Z[\hat{W}_\af]$ as a left $\F$-module, 
equipped with the product
$$
(f\otimes w)(g\otimes v)=fw(g)\otimes wv \quad(f,g\in \F,\;
w,v\in W).
$$
We simply denote $f\otimes w$ by $fw.$
Let $\theta\in R$ be the highest root.
For $i\in I$, 
the corresponding simple root is denoted by  $\alpha_i$, here we understood $\alpha_0$ to be $-\theta$. 
For $i\in I_\af$, define
\begin{equation}
D_i=(1-e^{\alpha_i})^{-1}({s_i-1})+1
\in \F[W_\af].
\end{equation}
The elements $D_i\;(i\in I_\af)$ satisfy the braid relation and
$
D_i^2=D_i.
$ For $w\in W_\af$, take an reduced expression  $w=s_{i_1}\cdots s_{i_r}$. Then $D_w=D_{i_1}\cdots D_{i_r}$ is defined.

For $\sigma w\in \hat{W}_\af\;(\sigma \in\Sigma, \, w\in W_\af)$, define
$D_{\sigma w}=\sigma D_w.$ The extended level zero $K$- nil-Hecke algebra 
$\hat{\mathbb{K}}_\af$
is the left $R(T^\ad)$-module generated by 
$D_x\;(x\in \hat{W}_\af).$ This is indeed a ring, but the ``scalars'' $R(T^\ad)$ 
are not central. 
Let $\ePet$ be the centralizer subalgebra of $R(T^\ad)$ in $\hat{\mathbb{K}}_\af.$
Note that $t_\gamma\;(\gamma\in P^\lor)$
is an element of $\ePet.$


\subsection{Schubert basis of $\ePet$}
There is a fundamental action of $\hat{\mathbb{K}}_\af$ on the Peterson algebra $\ePet.$
By using the star action, we can construct the Schubert basis of $\ePet.$

\begin{defn}\label{defn:l-basis}
For $x\in \hat{W}_\af^0,$ define
$\OGr_x:=D_x*1\in \ePet.$
\end{defn}

\begin{prop}[{\cite[Proposition 2.24]{ISY}}]\label{prop:D acts on ell}
Let $i\in I_\af$ and $x\in \hat{W}_\af^0$. Then we have 
\begin{equation*}
D_i*\ell_x=\begin{cases}
    \ell_{s_i x} & \text{if $\ell(s_ix)>\ell(x)$ and $s_i x\in \hat{W}_\af^0$,}\\
    \ell_x &\text{otherwise}.
\end{cases}
\end{equation*}
\end{prop}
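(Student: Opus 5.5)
Since $\OGr_x=D_x*1$ by Definition~\ref{defn:l-basis}, I will reduce everything to identities among the $D$'s in $\hat{\mathbb{K}}_\af$. The plan needs two structural facts about the star action, which I take from \cite[\S 2.4]{ISY}:
\begin{itemize}
\item[(a)] it is a left $\hat{\mathbb{K}}_\af$-module action, so $D_i*(D_x*1)=(D_iD_x)*1$;
\item[(b)] $D_j*1=1$ for every finite $j\in I$.
\end{itemize}
Fact (b) should follow from $s_j*1=1$ for $j\in I$, because then $(s_j-1)*1=0$ and $D_j*1=(1-e^{\alpha_j})^{-1}\cdot 0+1=1$. I expect pinning down (b) from the precise definition of the star action (in particular that the finite Weyl group fixes the unit) to be the main obstacle. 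Everything else is Coxeter combinatorics.

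\textbf{Reduction to $W_\af$.} Write $x=\sigma y$ with $\sigma\in\Sigma$ and $y\in W_\af^0$. Conjugation by $\sigma$ permutes the $D$'s: $D_i\sigma=\sigma D_{\sigma^{-1}(i)}$, since $\sigma$ acts on $I_\af$ by a diagram automorphism compatible with the level-zero action on roots. Hence
\[
D_iD_x=\sigma D_{i'}D_y,\qquad i'=\sigma^{-1}(i).
\]
Lengths and Grassmannian-ness are preserved under left multiplication by $\sigma$. So it suffices to compute $D_{i'}D_y$ for $y\in W_\af^0$.

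\textbf{Case $\ell(s_ix)<\ell(x)$.} Then $y=s_{i'}y'$ is reduced, so $D_y=D_{i'}D_{y'}$. Using $D_{i'}^2=D_{i'}$ gives $D_{i'}D_y=D_y$, and therefore $D_i*\OGr_x=\OGr_x$.

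\textbf{Case $\ell(s_ix)>\ell(x)$.} The braid relations and the definition of $D_w$ via reduced words give $D_{i'}D_y=D_{s_{i'}y}$, hence $D_iD_x=D_{s_ix}$.
\begin{itemize}
\item If $s_ix\in\hat{W}_\af^0$, then $D_i*\OGr_x=D_{s_ix}*1=\OGr_{s_ix}$.
\item If $s_ix\notin\hat{W}_\af^0$, I use the affine analogue of Lemma~\ref{lem:std} (with $W_P$ replaced by $W$, and $W_\af^0$ the minimal coset representatives of $W_\af/W$). It gives $s_{i'}y=ys_j$ for some $j\in I$ with $\ell(ys_j)>\ell(y)$. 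Then $D_{s_{i'}y}=D_yD_j$, so by (a) and (b)
\[
D_i*\OGr_x=\sigma D_yD_j*1=\sigma D_y*1=\OGr_x.
\]
\end{itemize}
This covers all cases of the proposition.
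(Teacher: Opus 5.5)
Your argument is correct. Note that the paper gives no proof of its own for this statement; it imports it verbatim from \cite[Proposition 2.24]{ISY}. What you have written is therefore a self-contained derivation of the cited fact. It uses the following ingredients:
\begin{itemize}
\item the $0$-Hecke relations for the $D_i$ (namely $D_i^2=D_i$ and the braid relations);
\item the conjugation identity $D_i\sigma=\sigma D_{\sigma^{-1}(i)}$ coming from the diagram action of $\Sigma$;
\item Deodhar's lemma for $W_\af/W$;
\item the two properties (a) and (b) of the star action.
\end{itemize}
All the case distinctions are handled correctly, including the reduction $\hat{W}_\af^0=\Sigma\cdot W_\af^0$ with lengths and Grassmannian-ness preserved.

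On the inputs you flagged: (a) and (b) genuinely come from the definition in \cite[\S 2.4]{ISY} and cannot be extracted from this paper's text. In particular, do not try to obtain $s_j*1=1$ by taking $x=e$ in \eqref{eq:left on ell}. That would be circular, because the paper deduces \eqref{eq:left on ell} from the very proposition you are proving.

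Note also that (b) is unavoidable, since it is forced by the statement itself. Take $x=e$: then $\OGr_e=1$ and $s_j\notin\hat{W}_\af^0$ for $j\in I$. So any proof must draw on this property of the star action.

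Finally, when passing from $s_j*1=1$ to $D_j*1=1$, it is cleaner to write $s_j=e^{\alpha_j}+(1-e^{\alpha_j})D_j\in\hat{\mathbb{K}}_\af$. This gives $(1-e^{\alpha_j})(D_j*1-1)=0$, and you can then cancel using that $\ePet$ is torsion-free over $R(T^\ad)$. This avoids applying $(1-e^{\alpha_j})^{-1}$ inside $\ePet$.
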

\begin{thm}[{\cite[Theorem 2.15]{ISY}}]
We have 
$\ePet=\bigoplus_{x\in \hat{W}_\af^0} R(T^\ad)\ell_x.$
\end{thm}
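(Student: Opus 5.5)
The plan is to identify $\ePet$, as an $R(T^\ad)$-module, with a free module whose basis is visibly indexed by $\hat{W}_\af^0$, and to show that the elements $\OGr_x$ go to a unitriangular family in it. Let $J\subset\hat{\mathbb{K}}_\af$ be the left ideal generated by $D_j-1$ for $j\in I$. Write $M=\hat{\mathbb{K}}_\af/J$ and $p\colon\hat{\mathbb{K}}_\af\to M$ for the projection. Using $D_{\sigma w}=\sigma D_w$, $D_i^2=D_i$, and the braid relations, I first check that $M$ is a free left $R(T^\ad)$-module with basis $\{p(D_x)\mid x\in\hat{W}_\af^0\}$. For $x=x^0u$ with $x^0\in\hat W_\af^0$ and $u\in W$, we have $D_x=D_{x^0}D_u\equiv D_{x^0}$ modulo $J$. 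The star action of \cite[\S 2.4]{ISY} should be exactly what is transported through $p$, so that $\OGr_x=D_x*1$ is characterised as the unique element of $\ePet$ with $p(\OGr_x)=p(D_x)$.

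\textbf{Step 1 (injectivity of $p|_{\ePet}$).} After extending scalars to $\F$, the centralizer of $\F$ in $\F[\hat W_\af]$ is $\bigoplus_{\gamma\in P^\lor}\F\,t_\gamma$. Indeed, the level zero action of $t_\gamma w$ on $\Z[Q]$ is trivial only when $w=1$, and comparing $fa=af$ for $a=\sum c_y y$ forces $c_y=0$ unless $y$ acts trivially. The translations $t_\gamma$ lie in distinct cosets $t_\gamma W$. Using $D_j-1=(1-e^{\alpha_j})^{-1}(s_j-1)$, the kernel of $p\otimes\F$ is $\F[\hat W_\af]\cdot\{s_j-1\}$, and $p\otimes\F$ identifies the target with $\F[\hat W_\af/W]$. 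Hence $\sum c_\gamma t_\gamma\mapsto\sum c_\gamma\,t_\gamma W$ is injective, and therefore so is $p|_{\ePet}$.

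\textbf{Step 2 (membership and triangularity).} By Definition \ref{defn:l-basis} and the fact that the star action preserves $\ePet$, each $\OGr_x$ lies in $\ePet$. Using Proposition \ref{prop:D acts on ell} inductively along a reduced word of $x$ (starting from $\OGr_\sigma=\sigma$ for $\sigma\in\Sigma$), I would show that $p(\OGr_x)=p(D_x)$ up to terms $p(D_y)$ with $y<x$, and in fact equality, since $\OGr_x$ is the element corresponding to $D_x$. Combined with Step 1, this gives linear independence of the family $\{\OGr_x\}$ over $R(T^\ad)$.

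\textbf{Step 3 (spanning over $R(T^\ad)$).} Take $a\in\ePet$ and write $p(a)=\sum_x c_x\,p(D_x)$ with $c_x\in R(T^\ad)$; these coefficients are integral because $a\in\hat{\mathbb{K}}_\af$ and $M$ is free over $R(T^\ad)$. Then $a-\sum_x c_x\OGr_x$ lies in $\ePet$ and maps to $0$ under $p$, so it vanishes by Step 1. I expect the main obstacle to be the bookkeeping behind Steps 1--2. One must make precise that the star action is the module structure induced on $M\cong\ePet$, and that freeness of $M$ holds over $R(T^\ad)$ itself rather than only over $\F$; the latter requires that the $D_x$ for $x\in\hat W_\af$ form an $R(T^\ad)$-basis of $\hat{\mathbb{K}}_\af$. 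I would first try to prove this basis property by the standard unitriangularity of $D_x$ against $x$ in $\F[\hat W_\af]$, with leading coefficient a unit up to sign, as in \cite{LSS}.
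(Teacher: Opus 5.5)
The paper gives no argument for this statement; it is quoted from \cite{ISY}. So your proposal has to stand on its own.

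\textbf{What works.} Step~1 is correct and is the easy half. We have $\ePet\subseteq Z:=\bigoplus_{\gamma\in P^\lor}\F t_\gamma$, and $p$ is injective on $\ePet$ because $M$ is torsion-free and the cosets $t_\gamma W$ are distinct. Step~3 is also fine once $p(\ell_x)=p(D_x)$ is known.

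\textbf{The gap is Step~2.} This step carries the whole existence content of the theorem. You need, for every $x\in\hat W^0_\af$, an element of $\ePet$ congruent to $D_x$ modulo $J$; that is, $p|_{\ePet}$ must be onto $M$. You obtain this only by saying the star action ``should be'' the one transported through $p$. Either reading fails:
\begin{itemize}
\item Taken as a definition, this is circular. Transporting the $\hat{\mathbb{K}}_\af$-module structure of $M$ onto $\ePet$ presupposes that $p|_{\ePet}$ is onto.
\item Taken as the abstract action quoted in the paper, nothing you use ties it to $p$. Proposition~\ref{prop:D acts on ell} only relates the $\ell_x$ to each other, so induction along a reduced word gives no information about $p(\ell_x)$. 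The phrase ``in fact equality, since $\ell_x$ is the element corresponding to $D_x$'' assumes the conclusion.
\end{itemize}
Your base case is also false. For $\sigma\neq 1$ we have $\sigma e^{\lambda}=e^{u_\sigma(\lambda)}\sigma$ with $u_\sigma\neq 1$, so $\sigma\notin\ePet$. The lift of $p(\sigma)$ in $\ePet$ is $\sigma u_\sigma^{-1}=t_{\gamma_\sigma}$.

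\textbf{How to fill it.} The missing step is short once the star action is made explicit. Write $\F J=\sum_{j\in I}\F[\hat W_\af](s_j-1)$ for the $\F$-span of $J$.
\begin{enumerate}
\item Every coset in $\hat W_\af/W$ is $t_\gamma W$ for a unique $\gamma$. So your Step~1 map is an $\F$-linear bijection $Z\to\F[\hat W_\af/W]$.
\item Transporting the left $\F[\hat W_\af]$-module structure gives $(t_\beta w)*b=t_\beta w\,b\,w^{-1}$. In particular $a*b-ab\in\F J$ for all $a\in\F[\hat W_\af]$ and $b\in Z$.
\item For $b\in\ePet$ one checks
\[
D_i*b=D_ib-e^{-\alpha_i}s_i\,b\,(D_i-1)\ \ (i\in I),\qquad D_0*b=D_0b+s_0\,b\,u(D_j-1)u^{-1},\qquad \sigma*b=\sigma\, b\, u_\sigma^{-1},
\]
where $u\in W$ and $j\in I$ satisfy $u(\alpha_j)=\theta$. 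All right-hand sides lie in $\hat{\mathbb{K}}_\af$. Since $\hat{\mathbb{K}}_\af$ is spanned by products $f\sigma D_{i_1}\cdots D_{i_r}$, this gives $\hat{\mathbb{K}}_\af*\ePet\subseteq\ePet$.
\item Torsion-freeness of $M$ gives $\hat{\mathbb{K}}_\af\cap\F J=J$. Hence $p(a*b)=a\,p(b)$ for $a\in\hat{\mathbb{K}}_\af$ and $b\in\ePet$.
\item Therefore $p(\ell_x)=p(D_x*1)=p(D_x)$ directly, with no induction.
\end{enumerate}
With this inserted, your Steps~1 and~3 finish the proof.

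\textbf{Minor point.} For freeness of $\hat{\mathbb{K}}_\af$ you only need the coefficient of $x$ in $D_x$ to be nonzero in $\F$; spanning holds by definition. That coefficient is not a unit of $R(T^\ad)$, contrary to what you wrote.
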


\begin{remark}Non-extended verion of 
$K$-nil-Hecke algebra 
$\mathbb{K}_\af$, and that of the Peterson algebra $\Pet
$
are defined similarly.
$\Pet$ has an $R(T)$-basis $\{\ell_x\mid x\in W_\af^0\}$ defined similarly, by using only $D_x$ for $x\in W_\af^0.$
\end{remark}

\subsection{Star action of $W_\af$ on Schubert basis elements}
\begin{prop}
For $i\in I_\af$, we have 
\begin{equation}
s_i* a=e^{\alpha_i}a
+(1-e^{\alpha_i})D_i *a
\quad \text{for all $a\in \ePet$}.
\label{eq:left action def}
\end{equation}
In particular, for $x\in \hat{W}_\af^0$ and $i\in I_\af$, we have
\begin{equation}
s_i* \ell_x=\begin{cases}
 e^{\alpha_i}\ell_x
+(1-e^{\alpha_i})\ell_{s_i x}& \text{if $s_ix>x$ and $s_i x\in \hat{W}_\af^0$,} \\
\ell_x & \text{otherwise}.
\end{cases}
\label{eq:left on ell}
\end{equation}
\end{prop}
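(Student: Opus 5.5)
The plan is to derive \eqref{eq:left action def} directly from the definition of $D_i$, using only the fact that the star action is linear in the operator. Then \eqref{eq:left on ell} follows by specializing to $a=\ell_x$ and invoking Proposition~\ref{prop:D acts on ell}.

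\textbf{Step 1: linearity of the star action.} I will use the star action of \cite[\S 2.4]{ISY} in the following form. It extends to the localization $\F\otimes\hat{\mathbb{K}}_\af=\F[\hat{W}_\af]$ acting on $\F\otimes_{R(T^\ad)}\ePet$, and it is left $\F$-linear in the operator:
\[
(fX)*a=f\,(X*a),\qquad (X+Y)*a=X*a+Y*a .
\]
On a group element $w\in\hat{W}_\af$ it is the $\hat{W}_\af$-action. Concretely, I expect it to be conjugation $w*a=waw^{-1}$, which preserves the centralizer $\ePet$ because $w$ normalizes $\F$. I will check linearity against the definition in \cite{ISY}: for $X=\sum_w c_w w$ one has $X*a=\sum_w c_w\,(w*a)$, so linearity holds. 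Confirming that the star action on $\hat{\mathbb{K}}_\af$ is really the restriction of this $\F$-linear action of $\F[\hat{W}_\af]$ is the main, though mild, obstacle. If \cite{ISY} defines $*$ differently, I would first prove that agreement as a lemma.

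\textbf{Step 2: the identity \eqref{eq:left action def}.} Write
\[
D_i=(1-e^{\alpha_i})^{-1}(s_i-1)+1
\]
and apply Step 1:
\[
D_i*a=(1-e^{\alpha_i})^{-1}\bigl(s_i*a-a\bigr)+a .
\]
Multiplying by $(1-e^{\alpha_i})$ gives
\[
(1-e^{\alpha_i})\,D_i*a=s_i*a-a+(1-e^{\alpha_i})a=s_i*a-e^{\alpha_i}a .
\]
Rearranging yields \eqref{eq:left action def}. Both sides lie in $\ePet$ itself, not just its localization, because $D_i\in\hat{\mathbb{K}}_\af$ preserves $\ePet$.

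\textbf{Step 3: the formula \eqref{eq:left on ell}.} Put $a=\ell_x$ with $x\in\hat{W}_\af^0$ and use Proposition~\ref{prop:D acts on ell}.
\begin{itemize}
\item If $s_ix>x$, i.e.\ $\ell(s_ix)>\ell(x)$, and $s_ix\in\hat{W}_\af^0$, then $D_i*\ell_x=\ell_{s_ix}$. This gives $s_i*\ell_x=e^{\alpha_i}\ell_x+(1-e^{\alpha_i})\ell_{s_ix}$.
\item Otherwise $D_i*\ell_x=\ell_x$, and the two terms combine to $e^{\alpha_i}\ell_x+(1-e^{\alpha_i})\ell_x=\ell_x$.
\end{itemize}
Here $s_ix>x$ in Bruhat order coincides with the length condition, since $s_ix$ and $x$ differ by a single reflection.
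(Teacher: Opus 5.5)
Your proof is correct and is essentially the paper's own argument. The paper simply says that \eqref{eq:left action def} is the definition of $D_i*a$, which amounts to linearity of the star action applied to $s_i=(1-e^{\alpha_i})D_i+e^{\alpha_i}\in\hat{\mathbb{K}}_\af$ and needs no passage to $\F$; it then says that \eqref{eq:left on ell} is equivalent to Proposition~\ref{prop:D acts on ell}. One caution: your aside that $w*a=waw^{-1}$ for all $w\in\hat W_\af$ is wrong, since it would make every $\ell_x=D_x*1$ equal to the scalar $1$, whereas $s_0*1=e^{\alpha_0}+(1-e^{\alpha_0})\ell_{s_0}\neq 1$; your argument never uses this guess, only linearity in the operator, which does hold.
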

\begin{proof}
\eqref{eq:left action def} is the definition of $D_i*a$.
\eqref{eq:left on ell} is equivalent to
Proposition \ref{prop:D acts on ell}.
\end{proof}


\subsection{Proof of Proposition \ref{prop:W_commutes}}
\label{ssec:proof_W_commutes}

\begin{lem}[{\cite[Lemma 6.7]{CP}}]\label{lem:CP}
Let $x\in W_\af^0$.
Write 
$x=wt_\beta$ with $w\in W$ and $\beta\in Q^\lor$. 
For $i\in I$, we have 
\begin{equation*}
s_ix>x\;\;\text{and}\;\; s_ix\in W_\af^0
\Longleftrightarrow
s_iw<w.
\end{equation*}
\end{lem}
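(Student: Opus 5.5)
The plan is to translate both sides of the equivalence into statements about the finite root $w^{-1}(\alpha_i)$ and the affine root $x^{-1}(\alpha_i)$. Two standard facts drive this. First, for $y\in W_\af$ and a simple affine root $\alpha_i$ we have $\OGr(s_iy)>\OGr(y)$ if and only if $y^{-1}(\alpha_i)$ is a positive affine root. Second, $W_\af^0$ admits an explicit description. In the conventions of this paper, with $\beta$ anti-dominant and $wt_\beta\in W_\af^0$ as in the proposition preceding Theorem~\ref{thm:Kato}, the description is:
\[
x=wt_\beta\in W_\af^0 \iff \beta \text{ is anti-dominant and } w(\alpha_j)\in R^+ \text{ for all } j\in I \text{ with } \pair{\beta}{\alpha_j}=0 .
\]
I would take this description from \cite{LS:Acta} or \cite{LSS} rather than reprove it. Let $W_\beta=\langle s_j\mid \pair{\beta}{\alpha_j}=0\rangle$ be the stabilizer of $\beta$. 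The condition then says that $w$ is minimal in $wW_\beta$, so $w$ maps every positive root of the subsystem $R_\beta$ to a positive root.

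For $i\in I$ we have $s_ix=(s_iw)t_\beta$. Hence, by the description above with the same $\beta$, $s_ix\in W_\af^0$ holds if and only if $s_iw(\alpha_j)>0$ for all $j$ with $\pair{\beta}{\alpha_j}=0$. For the length condition, compute
\[
x^{-1}(\alpha_i)=t_{-\beta}w^{-1}(\alpha_i)=w^{-1}(\alpha_i)+c\,\pair{\beta}{w^{-1}(\alpha_i)}\delta ,
\]
where $c=\pm1$ is fixed by the paper's convention for $t_\beta$. I would pin down $c$ by checking a rank-one example, namely that $t_\beta$ with $\beta$ anti-dominant is Grassmannian. The expected outcome is that $x^{-1}(\alpha_i)$ is positive exactly when either $-\pair{\beta}{w^{-1}\alpha_i}>0$, or that pairing vanishes and $w^{-1}\alpha_i>0$. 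An affine root $\gamma+n\delta$ is positive if and only if $n>0$, or $n=0$ and $\gamma\in R^+$.

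($\Leftarrow$) Suppose $s_iw<w$, so that $w^{-1}(\alpha_i)=-\gamma$ with $\gamma\in R^+$. Anti-dominance gives $\pair{\beta}{\gamma}\le 0$. If $\pair{\beta}{\gamma}=0$, then $\gamma\in R_\beta^+$, so $w(\gamma)>0$; but $w(\gamma)=-\alpha_i<0$, a contradiction. Hence $\pair{\beta}{\gamma}<0$, the $\delta$-coefficient of $x^{-1}(\alpha_i)$ is positive, and $s_ix>x$. For Grassmannianity, take $j$ with $\pair{\beta}{\alpha_j}=0$. Then $w(\alpha_j)>0$, and $w(\alpha_j)\neq\alpha_i$ because $w^{-1}(\alpha_i)=-\gamma\notin R_\beta^+$. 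Since $s_i$ permutes $R^+\setminus\{\alpha_i\}$, we get $s_iw(\alpha_j)>0$, so $s_ix\in W_\af^0$.

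($\Rightarrow$) I argue by contraposition. Suppose $s_iw>w$, so that $w^{-1}(\alpha_i)=\gamma\in R^+$. If $\pair{\beta}{\gamma}<0$, then $x^{-1}(\alpha_i)$ is a negative affine root and $s_ix<x$. Otherwise $\pair{\beta}{\gamma}=0$, so $\gamma\in R_\beta^+$. Write $\gamma$ as a sum of simple roots of $R_\beta$. Since $w$ maps $R_\beta^+$ into $R^+$ and $w(\gamma)=\alpha_i$ is simple, $\gamma$ cannot decompose as a sum of two positive roots of $R_\beta$. Therefore $\gamma=\alpha_j$ for some $j$ with $\pair{\beta}{\alpha_j}=0$. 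Then $s_iw(\alpha_j)=-\alpha_i<0$, so $s_ix\notin W_\af^0$.

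The main obstacle I expect is bookkeeping of conventions rather than substance. One issue is the sign $c$ in the action of $t_\beta$ and the matching form of the Grassmannian criterion; the paper uses anti-dominant translations, so the naive dominant-convention statement must be mirrored. The other is confirming that the level-zero versus affine-root conventions of \S\ref{sec:K_nil}, where $\alpha_0=-\theta$, agree with the length criterion used above. Once these are fixed, the case analysis is as sketched.
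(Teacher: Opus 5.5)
The paper does not prove this lemma; it simply cites \cite[Lemma 6.7]{CP}. Your argument is a correct, self-contained proof of it. The ingredients all work: the criterion $\ell(s_iy)>\ell(y)\iff y^{-1}(\alpha_i)>0$, the description of $W_\af^0$, and the case split on $\langle\beta,\gamma\rangle$ for $\gamma=\pm w^{-1}(\alpha_i)$. You use antidominance to identify $R_\beta^+$ with the positive roots supported on $J=\{j:\langle\beta,\alpha_j\rangle=0\}$, which is valid because every term $c_j\langle\beta,\alpha_j\rangle$ is $\le 0$. The step forcing $\gamma=\alpha_j$ in ($\Rightarrow$) is cleanest by heights. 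Writing $\gamma=\sum_{j\in J}c_j\alpha_j$ gives $\alpha_i=\sum c_j\,w(\alpha_j)$ with every $w(\alpha_j)\in R^+$, so $\sum c_j=1$.

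One correction: your preview sentence has the sign backwards. Requiring $t_\beta\in W_\af^0$ for antidominant $\beta$ forces $t_\lambda(\gamma+n\delta)=\gamma+(n-\langle\lambda,\gamma\rangle)\delta$. Hence $x^{-1}(\alpha_i)=w^{-1}(\alpha_i)+\langle\beta,w^{-1}(\alpha_i)\rangle\delta$, so $c=+1$. Then $x^{-1}(\alpha_i)>0$ exactly when $\langle\beta,w^{-1}(\alpha_i)\rangle>0$, or that pairing vanishes and $w^{-1}(\alpha_i)>0$. Your two case analyses actually use this correct sign. In ($\Leftarrow$) the $\delta$-coefficient is $-\langle\beta,\gamma\rangle>0$, and in ($\Rightarrow$) it is $\langle\beta,\gamma\rangle<0$. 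So only the preview sentence needs fixing.

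The same formula gives $x(\alpha_j)=w(\alpha_j)-\langle\beta,\alpha_j\rangle\delta$ for $j\in I$. This proves your description of $W_\af^0$ in one line, so you need not cite it. Compared with the paper's bare citation, your route verifies the statement directly in the paper's own conventions (antidominant translations, $t_{\gamma_w}$ Grassmannian). The citation, by contrast, implicitly relies on Chaput--Perrin's conventions matching these.
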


\begin{proof}[Proof of Proposition \ref{prop:W_commutes}]
We will show
\begin{equation}
\Phi(s_i *\bO^w)=s_i^L 
\CO^w\quad \text{for $w\in W$ and $i\in I$.}
\label{eq:Phi_commutes_W}
\end{equation}
Recall the formula of $W$-action on $\{\CO^w\mid w\in W\}:$
\begin{equation}
\label{eq:W acts CO}
s_i^L
\CO^w=\begin{cases}
e^{\alpha_i}\CO^w+(1-e^{\alpha_i})\CO^{s_iw} & \text{if $s_i w<w$,}\\
\CO^w & \text{if $s_iw>w$.}
\end{cases}
\end{equation}
Then \eqref{eq:Phi_commutes_W}
is clear from Lemma \ref{lem:CP}, Lemma \ref{lem:W invariance of sigma} and \eqref{eq:left on ell}.
\end{proof}

\section{Examples}\label{sec:exa}
This appendix collects several examples illustrating applications of the results developed in the main text.
\subsection{Type $A_{n-1}$}
In type $A_{n-1}$,
we have $I=\{1,\ldots,n-1\}$.
All elements in $I$ are special and $\Sigma\cong \Z/n\Z.$
For $i\in I^s$,
$\miw{i}$ is the longest  
$i$-Grassmannian permutation $(i+1,i+2,\ldots,n,1,2,\ldots, i)$ in one-line notation.

In type $A_2$, we have
$
\miw{1}=s_2s_1,\; \miw{2}=s_1s_2,
$
and 
\begin{align*}
\bO^{s_1}=\frac{\ell_{\pi_1^{-1}s_0}}{\sigma_1},\quad
\bO^{s_2}=\frac{\ell_{\pi_2^{-1}s_0}}{\sigma_2},\quad
\bO^{s_1s_2}=\frac{\ell_{\pi_2^{-1}}}{\sigma_2},\quad
\bO^{s_2s_1}=\frac{\ell_{\pi_1^{-1}}}{\sigma_1},\quad
\bO^{w_\circ}=\frac{\ell_{s_0}}{\sigma_1\sigma_2},
\end{align*}
and $
Q_1=\dfrac{\sigma_2}{\sigma_1^2},\;
Q_2=\dfrac{\sigma_1}{\sigma_2^2},
$
where by abuse of notation we 
simply denote  $\prod_{j\in I}\sigma_{j}^{-\pair{\alpha_k^\lor}{\alpha_j}}$by $Q_k.$
Let us consider $i=2.$ For example, we have by \eqref{eq:miw i ell},
 \begin{align*}
 \bO^{\miw{2}}\cdot
 (\miw{2}*{\bO^{s_1}})&
=\frac{\ell_{\pi_2^{-1}}}{\sigma_2}
\miw{2}*\left({\frac{\ell_{\pi_1^{-1}s_0}}{\sigma_1}}\right)=\frac{\ell_{\pi_2^{-1}}}{\sigma_2}
{\frac{(\ell_{\pi_2^{-1}})^{-1}\ell_{\pi_2^{-1}\pi_1^{-1}s_0}}{\sigma_1}}\\
&=\frac{\ell_{s_0}}{\sigma_1\sigma_2}
=\bO^{w_\circ},
\end{align*}

The remaining cases (except for $w=e$) are given as follows: 
\begin{align*}
\bO^{\miw{2}}\cdot
(\miw{2}*{\bO^{s_2}})&=Q_2\bO^{s_1},\quad
\bO^{\miw{2}}\cdot(\miw{2}*{\bO^{s_1s_2}})=
Q_2\bO^{s_2s_1},\\
\bO^{\miw{2}}\cdot(\miw{2}*{\bO^{s_2s_1}})&=Q_1Q_2,\quad
\bO^{\miw{2}}\cdot(\miw{2}*{\bO^{w_\circ}})=Q_1Q_2\bO^{s_2}.
\end{align*}

\subsection{Type $C_2$}
In type $C_n$, $I=\{1,2,\ldots,n\}$ and we have $I^s=\{n\},$ and $
\miw{n}=s_n
(s_{n-1}s_n)\cdots (s_2 \cdots s_{n-1}s_n)
(s_1s_2 \cdots s_{n-1}s_n).
$

In type $C_2$, we have 
$\miw{2}=s_2s_1s_2$. We have
\begin{align*}
\bO^{s_1}&=\frac{\ell_{s_2s_1s_0}}{\sigma_1},\quad
\bO^{s_2}=
\frac{\ell_{\pi_2^{-1}s_1s_0}}{\sigma_2},\quad
\bO^{s_1s_2}=
\frac{\ell_{\pi_2^{-1}s_0}}{\sigma_2},\quad
\bO^{s_2s_1}=
\frac{\ell_{s_1s_0}}{\sigma_1},\\
\bO^{s_1s_2s_1}&=
\frac{\ell_{s_0}}{\sigma_1},\quad
\bO^{\miw{2}}=\bO^{s_2s_1s_2}
=\frac{\ell_{\pi_2^{-1}}}{\sigma_2},\quad
\bO^{w_\circ}=
\frac{\ell_{\pi_2^{-1}s_2s_1s_0}}{\sigma_1\sigma_2},
\end{align*}
and 
$
Q_1=\dfrac{\sigma_2^2}{\sigma_1^2},\quad
Q_2=\dfrac{\sigma_1}{\sigma_2^2}.
$ 
The following table lists $\bO^{v_2}\cdot v_2*\bO^w$ for $w\in W$.

\begin{center}
\renewcommand{\arraystretch}{1.4}
\begin{tabular}{c|c|c|c|c|c|c}
$s_1$
& $s_2$
& $s_1s_2$
& $s_2s_1$
& $s_1s_2s_1$
& $s_2s_1s_2$
& $w_\circ$ \\ \hline
$\bO^{w_\circ}$
& $Q_2\,\bO^{s_2s_1}$
& $Q_2\,\bO^{s_1s_2s_1}$
& $Q_1Q_2\,\bO^{s_2}$
& $Q_1Q_2\,\bO^{s_1s_2}$
& $Q_1Q_2^2$
& $Q_1Q_2^2\,\bO^{s_1}$
\end{tabular}
\end{center}

\subsection{Type $D_5$}
Consider type $D_5$ and $i=4.$
Let $w\in W$ be the element in Example \ref{ex:D5-2}. We have
$
\piv_4-w^{-1}\piv_4=
\alpha_2^\lor+\alpha_3^\lor+\alpha_4^\lor+\alpha_5^\lor.$
Therefore
we have
$$
\CO^{\miw{4}}
\cdot \miw{4}^L\CO^{w}
=Q_2Q_3Q_4Q_5
\CO^{\miw{4} w}.
$$

\end{document}